\theoremstyle{plain}
\newtheorem{theo}{Theorem}[section]
\newtheorem{definition}[theo]{Definition}
\theoremstyle{remark}
\newtheorem{remark}[theo]{Remark}
\numberwithin{equation}{section}
\title[Convergence of CAT for Smoluchowski coagulation equation]{Convergence of the cell average technique for Smoluchowski coagulation equation}
\author{Ankik Kumar Giri\\}
\thanks{Johann Radon Institute for Computational and Applied Mathematics (RICAM), Austrian Academy of Sciences,
Altenbergerstrasse\ 69, A-4040 Linz, Austria} \email{ankik.giri@ricam.oeaw.ac.at, ankik.math@gmail.com}
\begin{document}
\baselineskip 16pt \markright{Text}

\thispagestyle{empty}

\begin{abstract}
We present the convergence analysis of the cell average technique, introduced in \cite{J_Kumar:2007}, to solve the
 nonlinear continuous Smoluchowski coagulation equation. It is shown that the technique is second order accurate on uniform grids 
and first order accurate on non-uniform smooth (geometric) grids. As an essential ingredient, the consistency of the technique is 
thoroughly discussed.
\end{abstract}

\maketitle

{\rm \bf Mathematics subject classification (2010):} 45J05, 45K05, 45L05, 65R20 
\\
{\rm \bf Key-words:} Particles; Coagulation; Cell average technique; Consistency; Lipschitz condition; Convergence.
%
%
%
 \thispagestyle{empty}
 \section{Introduction}
 \label{sec:intro}
In this article we study some mathematical issues related to the convergence of the cell average technique (CAT) for solving the continuous Smoluchowski coagulation equation (SCE) which 
describes the dynamic evolution of particle growth. This model has many applications in biology, polymer science, 
astrophysics and oil industry etc. The nonlinear continuous SCE reads as 
\begin{eqnarray}\label{pbe}
\frac{\partial f(t,x)}{\partial t}  = \frac{1}{2}\int_{0}^{x} \beta(x-y,y)f(t,x-y)f(t,y)dy - \int_{0}^{\infty} \beta(x,y)f(t,x)f(t,y)dy,
\end{eqnarray}
with
\begin{eqnarray*}
f(x,0) = f^{\mbox{in}}(x)
\; \geq \; 0, \hspace{0.2in} x \in ]0,\infty[.
\end{eqnarray*}
Here the number density of particles of volume $x>0$ at time $t\geq 0$ is denoted by $f(x,t)$. The coagulation kernel 
$\beta(x,y)$ represents the rate at which particles of volume $x$ coalesce with particles of volume $y$. It will be 
assumed throughout the article that $\beta(x,y)=\beta(y,x)$ for all $x,y>0$, i.e.\ symmetric and $\beta(x,y)=0$ for 
either $x=0$ or $y=0$.
The integrals on the right-hand side of (\ref{pbe}) represent, respectively,
\begin{itemize}
 \item birth of particles of volume $x$ as a result of coagulation events of particles with volumes $y$ and $x-y$
$(0 \leq y \leq x)$
\item death of particles of volume $x$ due to the coagulation events with particles of volume $y$ 
$(0 \leq y < \infty)$.
\end{itemize}
There are several results available on the existence and uniqueness of solutions to (\ref{pbe}), see e.g. \cite{Dubovskii:1994, Dubovskii:1996, Escobedo:2003, Giri:2011, Giri:2011II, Giri:2012, Lamb:2004, Laurencot:2000, Laurencot:2002DC, 
Mclaughlin:1997I, Mclaughlin:1997, Stewart:1990}. To show all these results, one always needs certain growth conditions on the kernels. 
The SCE (\ref{pbe}) is analytically solvable only for some specific examples of kernels, see 
\cite{Dubovskii:1992, Fournier:2005, Fournier:2006}. 
Because of these restrictions, we are always interested to develop new numerical techniques with a detailed study 
of their mathematical analysis. Among all numerical methods for solving SCE (\ref{pbe}), the sectional methods are widely used 
because they are computationally attractive.
Recently, the cell average technique is introduced in \cite{J_Kumar:2007} which preserves all 
advantages of the existing sectional methods. Unlike the previous sectional methods, it gives very accurate prediction of selected higher moments and also provides quite satisfactory numerical results for 
number density. This gives us a strong motivation to analyze this technique mathematically. To apply a numerical method, first we need to consider the 
following truncated form of the problem (\ref{pbe}) by taking a finite computational domain $]0,R]$ where $0<R<\infty$.
\begin{eqnarray}\label{trunc pbe}
\frac{\partial n(t,x)}{\partial t}  =  \frac{1}{2}\int_{0}^{x} \beta(x-y,y)n(t,x-y)n(t,y)dy- \int_{0}^{R} \beta(x,y)n(t,x)n(t,y)dy,
\end{eqnarray}
with
\begin{eqnarray*}
n(x,0) = n^{\mbox{in}}(x)\geq 0, \ \ x\in \Omega:=]0,R],
\end{eqnarray*}
 where $n(t,x)$ represents the solution to the truncated equation (\ref{trunc pbe}). 
The existence and uniqueness of non-negative solutions for the truncated SCE (\ref{trunc pbe}) 
has been shown in \cite{Dubovskii:1996, Stewart:1990}. 
In \cite{Dubovskii:1996, Escobedo:2003, Giri:2011, Giri:2012, Laurencot:2000, Stewart:1990}, it is proven that the sequence 
of solutions to the truncated problems converge weakly to the solution of the original problem in a weighted $L^1$ space as $R \to \infty$
 for certain classes of kernels.

The purpose of this work is to demonstrate the convergence analysis of the cell average technique for solving SCE (\ref{trunc pbe}) on uniform and 
non-uniform smooth geometric grids. To the best of author's knowledge, this is the first attempt to show the convergence of CAT for solving nonlinear continuous 
SCE. The work presented here is motivated from \cite{J_Kumar:2008II} where the convergence of CAT is introduced for solving linear continuous breakage (fragmentation) 
problems.

The plan of this paper is as follows. The mathematical formulation of CAT is 
recalled in the next section. At the end of Section \ref{cat}, the main convergence theorem \ref{convergence theorem} is proved. To fulfill the 
requirements of main result, the consistency of the method and Lipschitz conditions are investigated in Section \ref{consistency} and \ref{convergence}, respectively. 
Finally, some conclusions are made in Section \ref{conc}.

\section{The cell average technique}\label{cat}
The cell average technique approximates the total number of particles in finite number of cells. 
As a first step, the continuous interval $\Omega:= ]0, R]$ is divided into a small number of cells defining size classes
$$\Lambda_i := ]x_{i-1/2}, x_{i+1/2}],\, i = 1, \ldots, I,$$
with
$$x_{1/2} = 0, \quad x_{I+1/2}= R.$$
The representative of each size class, usually the center point of each cell $x_i = (x_{i-1/2} + x_{i+1/2})/2$, is called pivot or
grid point.
We introduce $\Delta x_{\text{min}}$ and $\Delta x$ to satisfy
$$\Delta x_{\text{min}} \leq \Delta x_i = x_{i+1/2} - x_{i-1/2} \leq \Delta x.$$
For the purpose of later analysis we assume quasi uniformity of the grids, i.e.\
\begin{eqnarray}\label{grid cond}
 \frac{\Delta x}{\Delta x_{\text{min}}}\leq K,
\end{eqnarray}
where $K$ is a positive constant.
 The total number of particles in the $i$th cell is given as
\begin{eqnarray}\label{number}
N_i(t) = \int_{x_{i-1/2}}^{x_{i+1/2}} n(t, x) dx.
\end{eqnarray}
Integrating the continuous equation (\ref{trunc pbe}) over the $i$th cell we obtain
\begin{eqnarray*}
\frac{dN_i(t)}{dt} = B_i - D_i,\ \ \ \ i=1, \ldots, I.
\end{eqnarray*}
The total birth rate $B_i$ and the death rate $D_i$ are given as
\begin{eqnarray} \label{E:B_i}
B_i=\frac{1}{2}\int_{x_{i-1/2}}^{x_{i+1/2}}\int_{0}^{x}\beta(x-y, y)n(t,x-y)n(t,y)dy dx,
\end{eqnarray}
and
\begin{eqnarray}\label{E:D_i}
D_i = \int_{x_{i-1/2}}^{x_{i+1/2}}\int_{0}^{x_{I+1/2}}\beta(x, y)n(t,y)n(t,x)dy dx.
\end{eqnarray}
The above equations yield a semi-discrete system in $\mathbb{R}^I$
\begin{eqnarray}\label{E:IntegatedBreakage}
\frac{d\mathbf{N}}{dt} = \mathbf{B} - \mathbf{D}, \hspace{.1in} \mbox{with} \hspace{.1in} \mathbf{N}(0) = \mathbf{N}^{\mbox{in}},
\end{eqnarray}
where $\mathbf{N}, \mathbf{B}, \mathbf{D} \in \mathbb{R}^I$. 
The $i$th component of vectors $\mathbf{N}, \mathbf{B}$, and $\mathbf{D}$ are 
respectively defined in (\ref{number})-(\ref{E:D_i}). The vector $\mathbf{N}$ is formed by the vector of values of the step function obtained by $L^2$ projection of the exact solution $n$
into the space of step functions, which are constant on each cell. Note that this projection error can easily be shown of second order, see \cite{Giri:2010thesis}.
The total discrete birth and death rates of particles are evaluated by substituting the number density approximation
\begin{eqnarray*}
 n(t,x)\approx \sum_{i=1}^{I}N_i(t)\delta(x-x_i)
\end{eqnarray*}
into equations (\ref{E:B_i}) and (\ref{E:D_i}) as

\begin{eqnarray}\label{E:disc B_i}
 \hat{B}_i=\sum_{x_{i-1/2} \leq x_j+x_k < x_{i+1/2}}^{j\geq k}\bigg(1-\frac{1}{2}\delta_{j,k}\bigg)\beta(x_k,x_{j})N_j(t)N_k(t),
\end{eqnarray}
and
\begin{eqnarray}\label{E:disc D_i}
 \hat{D}_i=N_i(t)\sum_{j=1}^{I}\beta(x_i, x_j)N_j(t).
\end{eqnarray}
Here $\hat{B}_i$ and $\hat{D}_i$ denote the discrete birth and death rates, respectively, in the $i$th cell. The total volume flux $V_i$ into cell $i$ as a result of aggregation is given by
\begin{eqnarray}\label{E:V_i}
 V_i=\frac{1}{2}\int_{x_{i-1/2}}^{x_{i+1/2}}\int_{0}^{x}x\beta(x-y, y)n(t,x-y)n(t,y)dy dx.
\end{eqnarray}
Similarly to the discrete birth rate the discrete volume flux can be obtained as
\begin{eqnarray}\label{E:disc V_i}
 \hspace{1.1 cm}\hat{V}_i=\sum_{x_{i-1/2} \leq x_j+x_k < x_{i+1/2}}^{j\geq k}\bigg(1-\frac{1}{2}\delta_{j,k}\bigg)\beta(x_k,x_{j})N_j(t)N_k(t)(x_j+x_k).
\end{eqnarray}
Consequently, the average volume $\overline{v}_i\in
[x_{i-1/2},x_{i+1/2} ]$ of all new born particles in the $i$th cell
can be evaluated as
\begin{eqnarray}\label{def of v_i}
\overline{v}_i=\frac{\hat{V}_i}{\hat{B}_i},\ \ \hat{B}_i>0.
\end{eqnarray}
We do not need volume average $\overline{v}_i$ in case of
$\hat{B}_i=0$. However, for $\hat{B}_i=0$,
we can fix $\overline{v}_i=x_i$. Here we consider
that all new born particles in the $i$th cell are assigned temporarily to the average volume $\hat{v}_i$. If the average volume $\hat{v}_i$ is
same as the pivot point $x_i$ then the total birth $\hat{B}_i$  of the new born particles
can be
assigned to the pivot $x_i$ only. But this is rarely possible, and hence, the total birth $\hat{B}_i$ has to be assigned to the
neighboring pivots in such a way that the total number and mass remain
conserved during this reassignment. Finally, the resultant set of
ODEs takes the following form
\begin{eqnarray}\label{setofODES}
\frac{d\hat{N}_i}{dt} = \hat{B}^{CA}_i - \hat{D}^{CA}_i.
\end{eqnarray}
The above discretized system can also be written in the following vector form
\begin{eqnarray}\label{E:DiscretizedGenBreakage}
\frac{d\hat{\mathbf{N}}}{dt} = \hat{\mathbf{B}}(\hat{\mathbf{N}}) - \hat{\mathbf{D}}(\hat{\mathbf{N}})=:\hat{\mathbf {F}}(t,\hat{\mathbf{N}}), \hspace{.1in} \mbox{with} \hspace{.1in} \hat{\mathbf{N}}(0) = \mathbf{N}^{\mbox{in}},
\end{eqnarray}
where $ \hat{\mathbf{N}}, \hat{\mathbf{B}}, \hat{\mathbf{D}}\in \mathbb{R}^I$. The numerical approximation 
of total number of particles in $i$th cell, $N_i(t)$, is defined by $\hat{N}_i(t)$ which is the $i$th component of 
the vector $ \hat{\mathbf{N}}$. 
The discretized birth term, $\hat{B}^{CA}_i$, and death term, $\hat{D}^{CA}_i$, obtained from the cell average 
technique are defined below. These are the $i$th components of the vectors $\hat{\mathbf{B}}$ and $\hat{\mathbf{D}}$ respectively.
Let us consider the Heaviside function
\begin{eqnarray}\label{heavyside}
H(x):=\begin{cases}
1\ \, & \text{if}\ x>0,\\
\frac{1}{2}\ \, & \text{if}\ x=0,\\
\text{0}\ \, & \text{if}\ x<0.
\end{cases}
\end{eqnarray}
and
\begin{eqnarray}\label{values of lambda}
 \lambda_i^{\pm}(x)=\frac{x-x_{i\pm 1}}{x_i-x_{i\pm 1}}.
\end{eqnarray}
Then the birth and death terms are given as
\begin{eqnarray}\label{E:B_iCA}
\hat{B}^{CA}_i:&=&\hat{B}_{i-1}\lambda^{-}_i(\overline{v}_{i-1})H(\overline{v}_{i-1}-x_{i-1})
+\hat{B}_{i}\lambda^{+}_i(\overline{v}_{i})H(\overline{v}_{i}-x_{i})\\
&&\nonumber+\hat{B}_{i}\lambda^{-}_i(\overline{v}_{i})H(x_{i}-\overline{v}_{i})
+\hat{B}_{i+1}\lambda^{+}_i(\overline{v}_{i+1})H(x_{i+1}-\overline{v}_{i+1}),
\end{eqnarray}
and
\begin{eqnarray}\label{E:D_iCA}
\hat{D}^{CA}_i:= \hat{D}_i=N_i(t)\sum_{j=1}^{I}\beta(x_i, x_j)N_j(t).
\end{eqnarray}

The first and the fourth terms on the right hand side of equation
(\ref{E:B_iCA}) can be set to zero for $i=1$ and $i=I$, respectively. The detailed formulation can be found in \cite{J_Kumar:2007}. \\
By using (\ref{E:disc B_i}) and (\ref{E:disc D_i}) the cell average technique (\ref{setofODES}) can be written as
\begin{eqnarray}
 \frac{d\hat{N}_i(t)}{dt}&=&\lambda^{-}_i(\overline{v}_{i-1})H(\overline{v}_{i-1}-x_{i-1})\\
&&\hspace{1cm}\times \sum_{x_{i-3/2} \leq x_j+x_k < x_{i-1/2}}^{j\geq k}\bigg(1-\frac{1}{2}\delta_{j,k}\bigg)\beta(x_k,x_{j})\hat{N}_j(t)\hat{N}_k(t)\nonumber\\
&& + [\lambda^{+}_i(\overline{v}_{i})H(\overline{v}_{i}-x_{i})+\lambda^{-}_i(\overline{v}_{i})H(x_{i}-\overline{v}_{i}) ]\nonumber\\  
&&\hspace{1cm}\times \sum_{x_{i-1/2} \leq x_j+x_k < x_{i+1/2}}^{j\geq k}\bigg(1-\frac{1}{2}\delta_{j,k}\bigg)\beta(x_k,x_{j})\hat{N}_j(t)\hat{N}_k(t)\nonumber\\
&&+ \lambda^{+}_i(\overline{v}_{i+1})H(x_{i+1}-\overline{v}_{i+1})\nonumber\\ 
&& \hspace{1cm}\times \sum_{x_{i+1/2} \leq x_j+x_k < x_{i+3/2}}^{j\geq k}\bigg(1-\frac{1}{2}\delta_{j,k}\bigg)\beta(x_k,x_{j})\hat{N}_j(t)\hat{N}_k(t)\nonumber\\
&&\nonumber- \hat{N}_i(t)\sum_{j=1}^{I}\beta(x_i, x_j)\hat{N}_j(t).
\end{eqnarray}

It should be pointed out here that in this work we consider the following 
discrete norm 
\begin{eqnarray*}
 \|\mathbf{N}\|=\sum_{i=1}^{I}|N_i|.
\end{eqnarray*}
We consider $\mathcal{C}^2([a,b])$ as a space of twice continuously differentiable functions on $]a,b[$. Note that for the sake of simplicity in our analysis we assume that the
coagulation kernel satisfies
\begin{eqnarray}\label{cond on beta}
\beta \in \mathcal{C}^2(]0,R]\times
]0,R]).
\end{eqnarray}

Before moving to the main result, let us recall some definitions and a result taken from \cite{Hundsdorfer:2003}.
\begin{definition}
The \textbf{local discretization error} is defined by the residual left by substituting the exact solution
$\mathbf{N}(t)$ into equation (\ref{E:DiscretizedGenBreakage}) as
\begin{eqnarray}\label{E:SpatialTruncationError}
\mathbf{\sigma}(t) = \frac{d {\mathbf{N}}(t)}{d t} - \left(\hat{\mathbf{B}}\left({\mathbf{N}}(t)\right) -
\hat{\mathbf{D}}\left({\mathbf{N}}(t)\right)\right).
\end{eqnarray}
The scheme (\ref{E:DiscretizedGenBreakage}) is called consistent of order $p$ if, for $\Delta x \to 0$,
\begin{eqnarray*}
\| \mathbf{\sigma}(t) \| = {\mathcal O}(\Delta x^p), \quad \mbox{uniformly for all}\ \ t, \, 0 \leq t \leq T.
\end{eqnarray*}
\end{definition}

\begin{definition}
The \textbf{global discretization error} is defined by
\begin{eqnarray}\label{E:GlobalDiscretizationError}
 \mathbf{\epsilon}(t) = \mathbf{N}(t) -\hat{\mathbf{N}}(t).
\end{eqnarray}
 The scheme (\ref{E:DiscretizedGenBreakage}) is called convergent of order $p$ if, for $\Delta x \to 0$,
\begin{eqnarray*}
\| \mathbf{\epsilon}(t) \| = {\mathcal O}(\Delta x^p), \quad \mbox{uniformly for all } t,\, 0 \leq t \leq T.
\end{eqnarray*}
\end{definition}

It is important that the solution obtained by CAT remains non-negative for all times. This can be easily shown by using the next well known theorem.
In the following theorem we write $\hat{\mathbf{M}}\geq 0$ for a vector $\hat{\mathbf{M}}\in \mathbb{R}^I$ if all of its components are non-negative.
\begin{theo}\label{thm for positivity}
Suppose that $\hat{\mathbf {F}}(t,\hat{\mathbf{M}})$ defined in (\ref{E:DiscretizedGenBreakage}) is continuous and satisfies the Lipschitz condition as
\begin{eqnarray*}
 \|\hat{\mathbf{F}}(t,\hat{\mathbf{P}})-\hat{\mathbf{F}}(t,{\hat{\mathbf{M}}})\|\leq L \|\hat{\mathbf{P}}-\hat{\mathbf{M}}\|\ \ \mbox{ for all}\ \ \hat{\mathbf{P}}, \hat{\mathbf{M}}\in \mathbb{R}^I.
\end{eqnarray*}
Then the solution of the semi-discrete system (\ref{E:DiscretizedGenBreakage}) is non-negative if and only if for any vector $\hat{\mathbf{M}}\in \mathbb{R}^I$ with $\hat{\mathbf{M}}\geq 0$, 
the condition $\hat{M}_i =0$ implies $\hat{F}_i(t,\hat{\mathbf{M}}) \geq 0$ for any $i=1,\ldots,I$ and all $t\geq 0$.
\end{theo}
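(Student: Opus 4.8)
The plan is to recognize this statement as the classical characterization of positive invariance of the non-negative orthant $\mathbb{R}^I_{\geq 0}$ under the flow of (\ref{E:DiscretizedGenBreakage}); the condition ``$\hat{M}_i=0 \Rightarrow \hat{F}_i(t,\hat{\mathbf{M}})\geq 0$'' is precisely the quasipositivity (subtangentiality) condition on $\hat{\mathbf{F}}$. I would prove the two implications separately, disposing of necessity first since it is immediate, and establishing sufficiency through a perturbation argument.

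For necessity I would argue by contraposition. Suppose the quasipositivity condition fails, so that there exist $t^{*}\geq 0$ and $\hat{\mathbf{M}}\geq 0$ with $\hat{M}_i=0$ and $\hat{F}_i(t^{*},\hat{\mathbf{M}})<0$. Starting the system from $\hat{\mathbf{N}}(t^{*})=\hat{\mathbf{M}}\geq 0$, continuity of $\hat{\mathbf{F}}$ gives $\frac{d\hat{N}_i}{dt}(t^{*})=\hat{F}_i(t^{*},\hat{\mathbf{M}})<0$, whence $\hat{N}_i(t)<\hat{N}_i(t^{*})=0$ for $t$ slightly larger than $t^{*}$. This contradicts the assumed non-negativity of the solution, so non-negativity forces the stated condition.

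For sufficiency I would use the standard $\varepsilon$-perturbation trick. Fix $\varepsilon>0$ and consider the perturbed system $\frac{d\hat{N}^{\varepsilon}_i}{dt}=\hat{F}_i(t,\hat{\mathbf{N}}^{\varepsilon})+\varepsilon$ with $\hat{\mathbf{N}}^{\varepsilon}(0)=\mathbf{N}^{\mbox{in}}\geq 0$, which by continuity and the Lipschitz bound admits a unique solution on $[0,T]$. I claim $\hat{\mathbf{N}}^{\varepsilon}(t)>0$ componentwise for $t>0$. Indeed, if some component first reaches zero at a time $t_0$, with $\hat{\mathbf{N}}^{\varepsilon}(t_0)\geq 0$ and $\hat{N}^{\varepsilon}_i(t_0)=0$, then the quasipositivity hypothesis yields $\frac{d\hat{N}^{\varepsilon}_i}{dt}(t_0)=\hat{F}_i(t_0,\hat{\mathbf{N}}^{\varepsilon}(t_0))+\varepsilon\geq \varepsilon>0$, so $\hat{N}^{\varepsilon}_i$ is strictly increasing through $t_0$ and must have been negative just before $t_0$, contradicting the choice of $t_0$ as the first zero. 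Hence each perturbed trajectory remains strictly positive.

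The concluding step, and the only delicate point, is passing to the limit $\varepsilon\to 0$. Using the Lipschitz continuity of $\hat{\mathbf{F}}$ together with a Gronwall estimate for $\|\hat{\mathbf{N}}^{\varepsilon}-\hat{\mathbf{N}}\|$ (whose governing right-hand sides differ only by the constant vector $\varepsilon\mathbf{1}$), one obtains $\|\hat{\mathbf{N}}^{\varepsilon}(t)-\hat{\mathbf{N}}(t)\|\leq \varepsilon\, C(L,T)$ uniformly on $[0,T]$, so $\hat{\mathbf{N}}^{\varepsilon}\to\hat{\mathbf{N}}$ as $\varepsilon\to 0$. Since each $\hat{\mathbf{N}}^{\varepsilon}\geq 0$, the limit satisfies $\hat{\mathbf{N}}(t)\geq 0$ for all $t\in[0,T]$. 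I expect the main obstacle to be exactly this approximation: the hypothesis only delivers $\hat{F}_i\geq 0$ (not strictly positive) on the boundary, so the unperturbed system cannot be treated directly, and one must carefully justify both the strict positivity of each perturbed solution and the uniform convergence of the family $\{\hat{\mathbf{N}}^{\varepsilon}\}$ before the boundary behaviour of the limit can be inferred.
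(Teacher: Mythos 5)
Your proof is correct. Note, however, that the paper does not actually prove this theorem: its ``proof'' is a one-line citation of \cite[Chap.\ I, Theorem 7.1]{Hundsdorfer:2003}, and the argument you give --- necessity by contraposition, sufficiency via the perturbed system $\frac{d\hat{N}^{\varepsilon}_i}{dt}=\hat{F}_i(t,\hat{\mathbf{N}}^{\varepsilon})+\varepsilon$, a first-crossing argument for strict positivity, and a Gronwall estimate to pass to the limit $\varepsilon\to 0$ --- is precisely the standard proof of that cited result, so in substance you are reproducing the intended argument rather than diverging from it. Two small points are worth making explicit if you write this out in full: (i) the necessity direction needs ``the solution is non-negative'' to be read as ``for every non-negative initial vector the solution remains non-negative,'' since you must be free to restart the flow from the state $(t^{*},\hat{\mathbf{M}})$; and (ii) in the first-zero argument the case $t_{0}=0$ (a component of $\mathbf{N}^{\mathrm{in}}$ already vanishing) should be disposed of separately, observing that the derivative there is at least $\varepsilon>0$ so that component immediately enters the positive half-line. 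Finally, in the $\ell^{1}$ norm used in the paper the constant perturbation contributes $\varepsilon I$ rather than $\varepsilon$ to the Gronwall inequality; this only changes the constant $C(L,T)$ and does not affect the conclusion.
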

\begin{proof}
 The proof can be found in \cite[Chap. 1, Theorem 7.1]{Hundsdorfer:2003}.
\end{proof}

Now we shall state the main result which helps us to show the convergence of CAT. Note that the proof of Theorem \ref{convergence theorem} is motivated by a convergence result in \cite{Linz:1975}.
\begin{theo}\label{convergence theorem}
Let us assume that the Lipschitz conditions on $\hat{\mathbf{B}}({\mathbf{N}}(t))$ and $\hat{\mathbf{D}}({\mathbf{N}}(t))$ are satisfied for $0 \leq t \leq T$  and for all $\mathbf{N}$, $\hat{\mathbf{N}}\in \mathbb{R}^I$ where $\mathbf{N}$ and $\hat{\mathbf{N}}$ are the projected exact and numerical solutions defined in (\ref{E:IntegatedBreakage}) and (\ref{E:DiscretizedGenBreakage}) respectively. Then a consistent discretization method is  also convergent and the convergence is of the same order as the consistency.
\end{theo}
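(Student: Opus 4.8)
The plan is to derive a differential inequality for the global error $\epsilon(t)=\mathbf{N}(t)-\hat{\mathbf{N}}(t)$ and close it with Gronwall's lemma, exactly in the spirit of the consistency-plus-stability arguments used for Volterra equations in \cite{Linz:1975}. First I would rewrite the definition (\ref{E:SpatialTruncationError}) of the local discretization error as
\begin{eqnarray*}
\frac{d\mathbf{N}(t)}{dt} = \hat{\mathbf{B}}(\mathbf{N}(t)) - \hat{\mathbf{D}}(\mathbf{N}(t)) + \mathbf{\sigma}(t),
\end{eqnarray*}
so that the projected exact solution satisfies the discrete system (\ref{E:DiscretizedGenBreakage}) up to the residual $\mathbf{\sigma}(t)$. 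Subtracting (\ref{E:DiscretizedGenBreakage}) from this identity gives
\begin{eqnarray*}
\frac{d\mathbf{\epsilon}(t)}{dt} = \left[\hat{\mathbf{B}}(\mathbf{N}(t)) - \hat{\mathbf{B}}(\hat{\mathbf{N}}(t))\right] - \left[\hat{\mathbf{D}}(\mathbf{N}(t)) - \hat{\mathbf{D}}(\hat{\mathbf{N}}(t))\right] + \mathbf{\sigma}(t).
\end{eqnarray*}

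Next, since both $\mathbf{N}$ and $\hat{\mathbf{N}}$ start from the common datum $\mathbf{N}^{\mbox{in}}$, the initial error vanishes, $\mathbf{\epsilon}(0)=0$. Integrating the preceding identity over $[0,t]$, passing to the discrete norm, and applying the assumed Lipschitz bounds on $\hat{\mathbf{B}}$ and $\hat{\mathbf{D}}$ (with constants $L_B$ and $L_D$) to estimate the two bracketed differences by $(L_B+L_D)\|\mathbf{\epsilon}(s)\|=:L\|\mathbf{\epsilon}(s)\|$, while bounding the consistency contribution via $\sup_{0\le s\le T}\|\mathbf{\sigma}(s)\|$, I obtain the integral inequality
\begin{eqnarray*}
\|\mathbf{\epsilon}(t)\| \leq L\int_0^t \|\mathbf{\epsilon}(s)\|\,ds + T\sup_{0\le s\le T}\|\mathbf{\sigma}(s)\|, \qquad 0\le t\le T.
\end{eqnarray*}
Gronwall's inequality then yields $\|\mathbf{\epsilon}(t)\| \leq T\,e^{LT}\sup_{0\le s\le T}\|\mathbf{\sigma}(s)\|$ uniformly in $t\in[0,T]$. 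Since $T$ and $L$ are fixed constants and consistency of order $p$ gives $\sup_{0\le s\le T}\|\mathbf{\sigma}(s)\|=\mathcal{O}(\Delta x^p)$, I conclude $\|\mathbf{\epsilon}(t)\| = \mathcal{O}(\Delta x^p)$ uniformly on $[0,T]$, which is precisely convergence of order $p$.

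The computational content of this particular theorem is light, since both hypotheses are assumed. The genuine difficulty of the overall program lies not here but in verifying those hypotheses in the later sections: establishing consistency of the claimed order (Section \ref{consistency}) and, above all, proving the Lipschitz estimates on $\hat{\mathbf{B}}$ and $\hat{\mathbf{D}}$ with a constant $L$ that is \emph{uniform} in the grid parameter $\Delta x$ (Section \ref{convergence}). This uniformity is the crucial and most delicate point: were $L$ permitted to blow up as $\Delta x\to 0$, the factor $e^{LT}$ would destroy the conclusion. I expect the quasi-uniformity condition (\ref{grid cond}) to be exactly what guarantees this grid-independence.
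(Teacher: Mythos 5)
Your proposal is correct and follows essentially the same route as the paper: subtract the semi-discrete system from the residual identity for the projected exact solution, take norms, use $\mathbf{\epsilon}(0)=0$ together with the Lipschitz bounds to obtain an integral inequality, and close with Gronwall (the paper writes the constant as $2L$ and keeps $\int_0^t\|\sigma(\tau)\|\,d\tau$ before bounding it by $e_h t$, yielding $\|\epsilon(t)\|\leq \frac{e_h}{2L}[\exp(2Lt)-1]$, but this is the same estimate up to cosmetic differences). Your closing observation that the Lipschitz constant must be uniform in $\Delta x$ is apt and is indeed delivered in Section \ref{convergence}, where $L=3N_T^0C$ depends only on the kernel bound and the initial total number of particles.
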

\begin{proof}
Using the equations (\ref{E:SpatialTruncationError}) and (\ref{E:GlobalDiscretizationError}) we have for $\mathbf{\epsilon}(t) = \mathbf{N}(t) -\hat{\mathbf{N}}(t)$
\begin{eqnarray*}
  \frac{d}{dt}\epsilon(t)=\sigma(t)+(\hat{\mathbf{B}}({\mathbf{N}})-\hat{\mathbf{B}}({\mathbf{\hat{N}}}))-(\hat{\mathbf{D}}({\mathbf{N}})-\hat{\mathbf{D}}({\mathbf{\hat{N}}})).
 \end{eqnarray*}

We then take the norm on both sides to get
\begin{eqnarray*}
  \frac{d}{dt}\|\epsilon(t)\|\leq\|\sigma(t)\|+\|(\hat{\mathbf{B}}({\mathbf{N}})-\hat{\mathbf{B}}({\mathbf{\hat{N}}}))\|+\|(\hat{\mathbf{D}}({\mathbf{N}})-\hat{\mathbf{D}}({\mathbf{\hat{N}}}))\|.
 \end{eqnarray*}

Integrating with respect to $t$ with $\epsilon(0)=0$ and using the Lipschitz conditions (\ref{lipschitz condition for B})-(\ref{lipschitz condition for D}) we obtain the estimates
\begin{eqnarray*}
 \|\epsilon(t)\|\leq \int_{0}^{t}\|\sigma(\tau)\|d \tau+ 2L \int_{0}^{t}\|\epsilon(\tau)\|d \tau.
\end{eqnarray*}

From this it follows by Gronwall's Lemma that
\begin{eqnarray}
 \|\epsilon(t)\|\leq \frac{e_h}{2L}[\exp{(2Lt)}-1],
\end{eqnarray}
where
\begin{eqnarray*}
 e_h=\max_{0\leq t \leq T} \|\sigma(t)\|.
\end{eqnarray*}

If the scheme is consistent then $\lim_{h\to 0}e_h=0$. This completes the proof.
\end{proof}
To fulfill the requirements of Theorem \ref{convergence theorem}, for the convergence of CAT we need to prove that the scheme is 
consistent as well as the birth $\hat{\mathbf{B}}({\mathbf{N}}(t))$ and death $\hat{\mathbf{D}}({\mathbf{N}}(t))$ terms satisfy the Lipschitz conditions.

\section{Consistency}\label{consistency}
In this section, the consistency of CAT for solving SCE (\ref{trunc pbe}) is discussed. The discretization 
error is evaluated in the birth and death terms, respectively. Then the local discretization error is calculated by 
considering uniform and non-uniform smooth geometric grids.

Let us begin with the integrated birth term of SCE (\ref{trunc pbe}) over $i$th cell
\begin{eqnarray*}
 B_i=\frac{1}{2}\int_{x_{i-1/2}}^{x_{i+1/2}}\int_{0}^{x}\beta(x-y,y)n(t,x-y)n(t,y)dy dx.
\end{eqnarray*}
By changing the order of integration we get
\begin{eqnarray*}
 B_i&=&\frac{1}{2}\sum_{j=1}^{i-1}\int_{x_{j-1/2}}^{x_{j+1/2}}\int_{x_{i-1/2}}^{x_{i+1/2}}\beta(x-y,y)n(t,x-y)n(t,y)dx dy \nonumber \\
&&+\frac{1}{2}\int_{x_{i-1/2}}^{x_{i+1/2}}\int_{y}^{x_{i+1/2}}\beta(x-y,y)n(t,x-y)n(t,y)dx dy.
\end{eqnarray*}
Now we apply the midpoint rule to the outer integrals in both terms on the right-hand 
side and use the relationship $N_i=n(t,x_i)\Delta x_i + {\mathcal O}(\Delta x^3)$ for the midpoint rule to obtain
\begin{eqnarray}\label{B_i in two integrals}
 B_i&=&\frac{1}{2}\sum_{j=1}^{i-1}N_j(t)\int_{x_{i-1/2}}^{x_{i+1/2}}\beta(x-x_j,x_j)n(t,x-x_j)dx \\
&&+\frac{1}{2}N_i(t)\int_{x_{i}}^{x_{i+1/2}}\beta(x-x_i,x_i)n(t,x-x_i)dx +{\mathcal O}(\Delta x^3),\nonumber\\
\nonumber&=:&\tilde{B}_i +{\mathcal O}(\Delta x^3).
\end{eqnarray}
Let us denote the integral terms in $\tilde{B}_i$ by $I_1$ and $I_2$, respectively, and evaluate them separately.

{\bf{Integral term $I_1$:}} We consider the first integral term on the right-hand side in 
(\ref{B_i in two integrals}) and use the substitution $x-x_j=x'$ to get
\begin{eqnarray}\label{I_1}
 I_1=\frac{1}{2}\sum_{j=1}^{i-1}N_j(t)\int_{x_{i-1/2}-x_j}^{x_{i+1/2}-x_j}\beta(x',x_j)n(t,x')dx'.
\end{eqnarray}
We now define $l_{i,j}$ and $\gamma_{i,j}$ to be those indices such that the following hold
\begin{eqnarray}\label{defi of index}
 x_{i-1/2}-x_j \in \Lambda_{l_{i,j}}\ \ \text{and} \ \ \gamma_{i,j}:=\text{sgn} [(x_{i-1/2}-x_j)- x_{l_{i,j}}],
\end{eqnarray}
where
\begin{eqnarray*}
\text{sgn}(x):=\begin{cases}
1\ \, & \text{if}\ x>0,\\
0 \ \, & \text{if}\ x=0,\\
\text{-1}\ \, & \text{if}\ x<0.
\end{cases}
\end{eqnarray*}
By the definition of the indices $l_{i,j}$ and $\gamma_{i,j}$ in (\ref{defi of index}), the equation (\ref{I_1}) can be rewritten as
 \begin{eqnarray}\label{I_1 in l,gamma}
 I_1&=&\frac{1}{2}\sum_{j=1}^{i-1} N_j(t) \int_{x_{i-1/2}-x_j}^{x_{l_{i,j}+\frac{1}{2}\gamma_{i,j}}}\beta(x',x_j)n(t,x')dx'\\
 &&+\frac{1}{2}\sum_{j=1}^{i-1}N_j(t)\sum_{k={l_{i,j}+\frac{1}{2}(\gamma_{i,j}+1)}}^{l_{i+1,j}+\frac{1}{2}(\gamma_{i+1,j}-1)}\int_{x_{k-1/2}}^{x_{k+1/2}}\beta(x',x_j)n(t,x')dx'\nonumber\\
 &&\nonumber+\frac{1}{2}\sum_{j=1}^{i-1}N_j(t)\int_{x_{l_{i+1,j}+\frac{1}{2}\gamma_{i+1,j}}}^{x_{i+1/2}-x_j}\beta(x',x_j)n(t,x')dx'.
\end{eqnarray}
Let $p$ be the total number of terms in the following sum
\begin{eqnarray*}
\sum_{k={l_{i,j}+\frac{1}{2}(\gamma_{i,j}+1)}}^{l_{i+
1,j}+\frac{1}{2}(\gamma_{i+1,j}-1)}\int_{x_{k-1/2}}^{x_{k+1/2}}\beta(x',x_j)n(t,x')dx'.
\end{eqnarray*}
In particular, let $p:= \#  \{ n:
l_{i,j}+\frac{1}{2}(\gamma_{i,j}+1) \leq n \leq
l_{i+1,j}+\frac{1}{2}(\gamma_{i+1,j}-1)\}$
 and set
$$k_1:=l_{i,j}+\frac{1}{2}(\gamma_{i,j}+1),\ \ k_2:=k_1+1,\ldots,\ \ k_{p-1}:=k_1+(p-2).$$
Next, we shall show that $p$ is finite and can be estimated by a constant which is independent of the grid size.
By using the definition of the indices $l_{i,j}$ and $\gamma_{i,j}$ in (\ref{defi of index}), we can estimate
$$(p-2)\Delta x_{\mbox{min}}\leq \Delta x_{k_2}+ \Delta x_{k_3}+ \ldots + \Delta x_{k_{p-1}} \leq \frac{1}{2}(\Delta x_{i}+\Delta x_{i+1})\leq \Delta x$$
which implies using the assumption of quasi uniformity (\ref{grid cond}) that
$$(p-2)\leq \frac{\Delta x}{\Delta x_{\mbox{min}}}\leq K \Rightarrow  p \leq K+2.$$
This means the above sum has uniformly bounded finite number of
terms. So we can apply the midpoint rule to the integral in second term on
the right hand side and use $N_k(t)=n(t,x_k)\Delta x_k + {\mathcal O}(\Delta x^3)$ to get
\begin{eqnarray}\label{final I_1}
 I_1&=&\frac{1}{2}\sum_{j=1}^{i-1} N_j(t) \int_{x_{i-1/2}-x_j}^{x_{l_{i,j}+\frac{1}{2}\gamma_{i,j}}}\beta(x',x_j)n(t,x')dx'\\
 &&+\frac{1}{2}\sum_{j=1}^{i-1}N_j(t)\sum_{x_{i-1/2}\leq (x_j+x_k)<x_{i+1/2}}\beta(x_k,x_j)N_k(t) \nonumber\\
 &&\nonumber+\frac{1}{2}\sum_{j=1}^{i-1}N_j(t)\int_{x_{l_{i+1,j}+\frac{1}{2}\gamma_{i+1,j}}}^{x_{i+1/2}-x_j}\beta(x',x_j)n(t,x')dx' +{\mathcal O}(\Delta x^3).
\end{eqnarray}

{\bf{Integral term $I_2$:}} Let us consider the second integral term in 
(\ref{B_i in two integrals}) and use the substitution $x-x_i=x'$ to estimate
\begin{eqnarray*}
I_2=\frac{1}{2}N_i(t)\int_{0}^{x_{i+1/2}-x_i}\beta(x',x_i)n(t,x')dx.
\end{eqnarray*}
Again by the definition of the indices $l_{i,j}$ and $\gamma_{i,j}$ in (\ref{defi of index}) we split the above integral as
\begin{eqnarray*}
 I_2&=&\frac{1}{2}N_i(t)\sum_{k=1}^{l_{i+1,i}+\frac{1}{2}(\gamma_{i+1,i}-1)}\int_{x_{k-1/2}}^{x_{k+1/2}}\beta(x',x_i)n(t,x')dx'\\
 &&+\frac{1}{2}N_i(t)\int_{x_{l_{i+1,i}+\frac{1}{2}\gamma_{i+1,i}}}^{x_{i+1/2}-x_i}\beta(x',x_i)n(t,x')dx'.
\end{eqnarray*}
By applying the midpoint rule in the first term and using the definition of the indices $l_{i,j}$ and $\gamma_{i,j}$, we get
\begin{eqnarray}\label{final I_2}
 I_2&=&\frac{1}{2}N_i(t)\sum_{x_i+x_k < x_{i+1/2}}\beta(x_k,x_i)N_k(t)\\
 &&\nonumber+\frac{1}{2}N_i(t)\int_{x_{l_{i+1,i}+\frac{1}{2}\gamma_{i+1,i}}}^{x_{i+1/2}-x_i}\beta(x',x_i)n(t,x')dx' +{\mathcal O}(\Delta x^3).
\end{eqnarray}
By substituting (\ref{final I_1}), (\ref{final I_2}) into (\ref{B_i in two integrals}) and using (\ref{E:disc B_i}), we estimate
\begin{eqnarray*}
B_i&=&\frac{1}{2}\sum_{j=1}^{i-1}N_j(t)\sum_{x_{i-1/2}\leq (x_j+x_k)<x_{i+1/2}}\beta(x_k,x_j)N_k(t) \nonumber\\
&&+\frac{1}{2}N_i(t)\sum_{x_i+x_k < x_{i+1/2}}\beta(x_k,x_i)N_k(t)\nonumber\\
&&+\frac{1}{2}\sum_{j=1}^{i-1} N_j(t) \int_{x_{i-1/2}-x_j}^{x_{l_{i,j}+\frac{1}{2}\gamma_{i,j}}}\beta(x',x_j)n(t,x')dx'\nonumber\\
&&+\frac{1}{2}\sum_{j=1}^{i-1}N_j(t)\int_{x_{l_{i+1,j}+\frac{1}{2}\gamma_{i+1,j}}}^{x_{i+1/2}-x_j}\beta(x',x_j)n(t,x')dx'\\ 
&&+\frac{1}{2}N_i(t)\int_{x_{l_{i+1,i}+\frac{1}{2}\gamma_{i+1,i}}}^{x_{i+1/2}-x_i}\beta(x',x_i)n(t,x')dx' +{\mathcal O}(\Delta x^3).
\end{eqnarray*}
The first two terms on the right hand side can be combined as
\begin{eqnarray*}
B_i&=&\sum_{x_{i-1/2} \leq x_j+x_k < x_{i+1/2}}^{j\geq k}\bigg(1-\frac{1}{2}\delta_{j,k}\bigg)\beta(x_k,x_{j})N_j(t)N_k(t)\\
&&+\frac{1}{2}\sum_{j=1}^{i-1} N_j(t) \int_{x_{i-1/2}-x_j}^{x_{l_{i,j}+\frac{1}{2}\gamma_{i,j}}}\beta(x',x_j)n(t,x')dx'\nonumber\\
&&+\frac{1}{2}\sum_{j=1}^{i}N_j(t)\int_{x_{l_{i+1,j}+\frac{1}{2}\gamma_{i+1,j}}}^{x_{i+1/2}-x_j}\beta(x',x_j)n(t,x')dx' +{\mathcal O}(\Delta x^3).
\end{eqnarray*}
Finally, by using (\ref{E:disc B_i}) the above expression for $B_i$ can be written as
\begin{eqnarray}\label{final B_i}
B_i&=&\hat{B}_i+\frac{1}{2}\sum_{j=1}^{i-1} N_j(t) \int_{x_{i-1/2}-x_j}^{x_{l_{i,j}+\frac{1}{2}\gamma_{i,j}}}\beta(x',x_j)n(t,x')dx'\\
&&\nonumber+\frac{1}{2}\sum_{j=1}^{i}N_j(t)\int_{x_{l_{i+1,j}+\frac{1}{2}\gamma_{i+1,j}}}^{x_{i+1/2}-x_j}\beta(x',x_j)n(t,x')dx' +{\mathcal O}(\Delta x^3).
\end{eqnarray}
Let us denote the sum of the remaining two integrals on the right hand side in (\ref{final B_i}) by the error $E_1$ which will be discussed later. 

Now we concentrate to evaluate the integrated term $V_i-x_i B_i$ 
by using (\ref{E:B_i}) and (\ref{E:V_i}) as follows
\begin{eqnarray*}
\hspace{2cm} V_i-x_i B_i=\frac{1}{2}\int_{x_{i-1/2}}^{x_{i+1/2}}\int_{0}^{x}(x-x_i)\beta(x-y, y)n(t,x-y)n(t,y)dy dx.
\end{eqnarray*}
By changing the order of integration we get
\begin{eqnarray*}
V_i-x_i B_i&=&\frac{1}{2}\sum_{j=1}^{i-1}\int_{x_{j-1/2}}^{x_{j+1/2}}\int_{x_{i-1/2}}^{x_{i+1/2}}(x-x_i)\beta(x-y,y)n(t,x-y)n(t,y)dx dy \nonumber \\
&&+\frac{1}{2}\int_{x_{i-1/2}}^{x_{i+1/2}}\int_{y}^{x_{i+1/2}}(x-x_i)\beta(x-y,y)n(t,x-y)n(t,y)dx dy.
\end{eqnarray*}
Now applying the midpoint rule to the outer integrals in both the terms on the right hand side and using the 
relationship $N_i=n(t,x_i)\Delta x_i + {\mathcal O}(\Delta x^3)$ with $\beta(0,\cdot)=0$, we obtain 
\begin{eqnarray}\label{V_i in two integrals}
 V_i-x_iB_ i&=&\frac{1}{2}\sum_{j=1}^{i-1}N_j(t)\int_{x_{i-1/2}}^{x_{i+1/2}}(x-x_i)\beta(x-x_j,x_j)n(t,x-x_j)dx \\
&&+\frac{1}{2}N_i(t)\int_{x_{i}}^{x_{i+1/2}}(x-x_i)\beta(x-x_i,x_i)n(t,x-x_i)dx +{\mathcal O}(\Delta x^5),\nonumber\\
\nonumber&=:&\tilde{V}_i-x_i\tilde{B}_i +{\mathcal O}(\Delta x^5).
\end{eqnarray}
We denote the integral terms involving in $\tilde{V}_i-x_i\tilde{B}_i$ by $P_1$ and $P_2$, respectively, and 
calculate them separately.

{\bf{Integral term $P_1$:}}
Let us consider the first integral term in (\ref{V_i in two integrals}) and insert $x-x_j=x'$ to estimate 
\begin{eqnarray}\label{P_1}
 P_1=\frac{1}{2}\sum_{j=1}^{i-1}N_j(t)\int_{x_{i-1/2}-x_j}^{x_{i+1/2}-x_j}(x'-x_i+x_j)\beta(x',x_j)n(t,x')dx'.
\end{eqnarray}
By the definition of the indices $l_{i,j}$ and $\gamma_{i,j}$ in (\ref{defi of index}), (\ref{P_1}) can be rewritten 
as 
 \begin{eqnarray*}
 P_1&=&\frac{1}{2}\sum_{j=1}^{i-1} N_j(t) \int_{x_{i-1/2}-x_j}^{x_{l_{i,j}+\frac{1}{2}\gamma_{i,j}}}(x'-x_i+x_j)\beta(x',x_j)n(t,x')dx'\nonumber\\
& &+\frac{1}{2}\sum_{j=1}^{i-1}N_j(t)\sum_{k={l_{i,j}+\frac{1}{2}(\gamma_{i,j}+1)}}^{l_{i+1,j}+\frac{1}{2}(\gamma_{i+1,j}-1)}\int_{x_{k-1/2}}^{x_{k+1/2}}(x'-x_i+x_j)\beta(x',x_j)n(t,x')dx'\nonumber\\
& &+\frac{1}{2}\sum_{j=1}^{i-1}N_j(t)\int_{x_{l_{i+1,j}+\frac{1}{2}\gamma_{i+1,j}}}^{x_{i+1/2}-x_j}(x'-x_i+x_j)\beta(x',x_j)n(t,x')dx'.
\end{eqnarray*}
Since the number of terms in the inner summation of second term on the right hand side is finite as before, therefore 
we can use the midpoint rule to the integral in second term on
the right hand side and use $N_k(t)=n(t,x_k)\Delta x_k + {\mathcal O}(\Delta x^3)$ to obtain
\begin{eqnarray}\label{final P_1}
 P_1&=&\frac{1}{2}\sum_{j=1}^{i-1} N_j(t) \int_{x_{i-1/2}-x_j}^{x_{l_{i,j}+\frac{1}{2}\gamma_{i,j}}}(x'-x_i+x_j)\beta(x',x_j)n(t,x')dx'\\
& &+\frac{1}{2}\sum_{j=1}^{i-1}N_j(t)\sum_{x_{i-1/2}\leq (x_j+x_k)<x_{i+1/2}}(x_k-x_i+x_j)\beta(x_k,x_j)N_k(t) \nonumber\\
& &+\frac{1}{2}\sum_{j=1}^{i-1}N_j(t)\int_{x_{l_{i+1,j}+\frac{1}{2}\gamma_{i+1,j}}}^{x_{i+1/2}-x_j}(x'-x_i+x_j)\beta(x',x_j)n(t,x')dx'\nonumber\\
& &+\frac{1}{24}\sum_{j=1}^{i-1}N_j(t)\sum_{k={l_{i,j}+\frac{1}{2}(\gamma_{i,j}+1)}}^{l_{i+1,j}+\frac{1}{2}(\gamma_{i+1,j}-1)}\Delta {x_k}^3 
\frac{\partial}{\partial x'}{\{\beta(x_k,x_j)n(t,x_k)\}}\nonumber\\
&&\nonumber+{\mathcal O}(\Delta x^4).
\end{eqnarray}

{\bf{Integral term $P_2$:}} 
Let us consider the second integral term in 
(\ref{V_i in two integrals}) and use the substitution $x-x_i=x'$ to estimate
\begin{eqnarray*}
P_2=\frac{1}{2}N_i(t)\int_{0}^{x_{i+1/2}-x_i}x'\beta(x',x_i)n(t,x')dx.
\end{eqnarray*}
By the definition of the indices $l_{i,j}$ and $\gamma_{i,j}$ in (\ref{defi of index}) we split the above integral as
\begin{eqnarray*}
 P_2&=&\frac{1}{2}N_i(t)\sum_{k=1}^{l_{i+1,i}+\frac{1}{2}(\gamma_{i+1,i}-1)}\int_{x_{k-1/2}}^{x_{k+1/2}}x'\beta(x',x_i)n(t,x')dx'\\
& &+\frac{1}{2}N_i(t)\int_{x_{l_{i+1,i}+\frac{1}{2}\gamma_{i+1,i}}}^{x_{i+1/2}-x_i}x'\beta(x',x_i)n(t,x')dx'.
\end{eqnarray*}
We apply the midpoint rule in the first term on the right hand side to obtain
\begin{eqnarray}\label{final P_2}
 P_2&=&\frac{1}{2}N_i(t)\sum_{x_i+x_k < x_{i+1/2}}(x_i+x_k-x_i)\beta(x_k,x_i)N_k(t)\\
& &\nonumber+\frac{1}{2}N_i(t)\int_{x_{l_{i+1,i}+\frac{1}{2}\gamma_{i+1,i}}}^{x_{i+1/2}-x_i}x'\beta(x',x_i)n(t,x')dx' +{\mathcal O}(\Delta x^4).
\end{eqnarray}
By substituting (\ref{final P_1}), and (\ref{final P_2}) into (\ref{V_i in two integrals}), we estimate
\begin{eqnarray*}
V_i-x_iB_i&=&\frac{1}{2}\sum_{j=1}^{i-1}N_j(t)\sum_{x_{i-1/2}\leq (x_j+x_k)<x_{i+1/2}}(x_j+x_k-x_i)\beta(x_k,x_j)N_k(t) \nonumber\\
&&+\frac{1}{2}N_i(t)\sum_{x_i+x_k < x_{i+1/2}}(x_i+x_k-x_i)\beta(x_k,x_i)N_k(t)\nonumber\\
&&+\frac{1}{2}\sum_{j=1}^{i-1} N_j(t) \int_{x_{i-1/2}-x_j}^{x_{l_{i,j}+\frac{1}{2}\gamma_{i,j}}}(x'-x_i+x_j)\beta(x',x_j)n(t,x')dx'\nonumber\\
&&+\frac{1}{2}\sum_{j=1}^{i}N_j(t)\int_{x_{l_{i+1,j}+\frac{1}{2}\gamma_{i+1,j}}}^{x_{i+1/2}-x_j}(x'-x_i+x_j)\beta(x',x_j)n(t,x')dx'\\ 
&&+\frac{1}{24}\sum_{j=1}^{i-1}N_j(t)\sum_{k={l_{i,j}+\frac{1}{2}(\gamma_{i,j}+1)}}^{l_{i+1,j}+\frac{1}{2}(\gamma_{i+1,j}-1)}\Delta {x_k}^3 
\frac{\partial}{\partial x'}{\{\beta(x_k,x_j)n(t,x_k)\}} +{\mathcal O}(\Delta x^4).
\end{eqnarray*}
The first two terms on the right-hand side can be combined and the above equation can be written as
\begin{eqnarray*}
V_i-x_iB_i&=&\sum_{x_{i-1/2} \leq x_j+x_k < x_{i+1/2}}^{j\geq k}\bigg(1-\frac{1}{2}\delta_{j,k}\bigg)(x_j+x_k-x_i)\beta(x_k,x_{j})N_j(t)N_k(t)\\
&&+\frac{1}{2}\sum_{j=1}^{i-1} N_j(t) \int_{x_{i-1/2}-x_j}^{x_{l_{i,j}+\frac{1}{2}\gamma_{i,j}}}(x'-x_i+x_j)\beta(x',x_j)n(t,x')dx'\nonumber\\
&&+\frac{1}{2}\sum_{j=1}^{i}N_j(t)\int_{x_{l_{i+1,j}+\frac{1}{2}\gamma_{i+1,j}}}^{x_{i+1/2}-x_j}(x'-x_i+x_j)\beta(x',x_j)n(t,x')dx'\\ 
&&+\frac{1}{24}\sum_{j=1}^{i-1}N_j(t)\sum_{k={l_{i,j}+\frac{1}{2}(\gamma_{i,j}+1)}}^{l_{i+1,j}+\frac{1}{2}(\gamma_{i+1,j}-1)}\Delta {x_k}^3 
\frac{\partial}{\partial x'}{\{\beta(x_k,x_j)n(t,x_k)\}} +{\mathcal O}(\Delta x^4).
\end{eqnarray*}
Finally, by using (\ref{E:disc V_i}) and (\ref{E:disc B_i}), the above expression can be rewritten as
\begin{eqnarray}\label{eq V_i and Disc_V_i}
V_i-x_iB_i&= &\hat{V}_i-x_i \hat{B}_i\\
&&+\frac{1}{2}\sum_{j=1}^{i-1} N_j(t) \int_{x_{i-1/2}-x_j}^{x_{l_{i,j}+\frac{1}{2}\gamma_{i,j}}}(x'-x_i+x_j)\beta(x',x_j)n(t,x')dx'\nonumber\\
&&\frac{1}{2}\sum_{j=1}^{i}N_j(t)\int_{x_{l_{i+1,j}+\frac{1}{2}\gamma_{i+1,j}}}^{x_{i+1/2}-x_j}(x'-x_i+x_j)\beta(x',x_j)n(t,x')dx'\nonumber\\ 
&&+\frac{1}{24}\sum_{j=1}^{i-1}N_j(t)\sum_{k={l_{i,j}+\frac{1}{2}(\gamma_{i,j}+1)}}^{l_{i+1,j}+\frac{1}{2}(\gamma_{i+1,j}-1)}\Delta {x_k}^3 
\frac{\partial}{\partial x'}{\{\beta(x_k,x_j)n(t,x_k)\}}\nonumber\\
&&\nonumber +{\mathcal O}(\Delta x^4).
\end{eqnarray}
Now we evaluate the each term in (\ref{E:B_iCA}) separately. We begin with the first term 
without Heaviside function $H(x)$ and insert the value of $\lambda$ from (\ref{values of lambda}) to get 
\begin{eqnarray*}
 \lambda^{-}_{i}(\overline{v}_{i-1})\hat{B}_{i-1}=\frac{\overline{v}_{i-1}-x_{i-1}}{x_i-x_{i-1}}\hat{B}_{i-1}
=\frac{2}{\Delta x_i + \Delta x_{i-1}}[\hat{V}_{i-1}-x_{i-1}\hat{B}_{i-1}].
\end{eqnarray*}
Using the equation (\ref{V_i in two integrals}) and (\ref{eq V_i and Disc_V_i}), we obtain
\begin{eqnarray}\label{1st eq lambda}
\hspace{.2in} \lambda^{-}_{i}(\overline{v}_{i-1})\hat{B}_{i-1}&=&\frac{2}{\Delta x_i + \Delta x_{i-1}}\bigg[\tilde{V}_{i-1}-x_{i-1}\tilde{B}_{i-1}\\
&&-\frac{1}{2}\sum_{j=1}^{i-2} N_j(t) \int_{x_{i-3/2}-x_j}^{x_{l_{i-1,j}+\frac{1}{2}\gamma_{i-1,j}}}(x'-x_{i-1}+x_j)\beta(x',x_j)n(t,x')dx'\nonumber\\
&&-\frac{1}{2}\sum_{j=1}^{i-1}N_j(t)\int_{x_{l_{i,j}+\frac{1}{2}\gamma_{i,j}}}^{x_{i-1/2}-x_j}(x'-x_{i-1}+x_j)\beta(x',x_j)n(t,x')dx'\nonumber\\ 
&&-\frac{1}{24}\sum_{j=1}^{i-2}N_j(t)\sum_{k={l_{i-1,j}+\frac{1}{2}(\gamma_{i-1,j}+1)}}^{l_{i,j}+\frac{1}{2}(\gamma_{i,j}-1)}\Delta {x_k}^3 
\frac{\partial}{\partial x'}{\{\beta(x_k,x_j)n(t,x_k)\}}\nonumber\\
&&\nonumber \hspace{.4in}+{\mathcal O}(\Delta x^4)\bigg].
\end{eqnarray}
To solve equation (\ref{1st eq lambda}), we estimate $\tilde{V}_{i-1}-x_{i-1}\tilde{B}_{i-1}$ by using 
(\ref{V_i in two integrals}) as follows
\begin{eqnarray*}
 \tilde{V}_{i-1}-x_{i-1}\tilde{B}_{i-1}&=&\frac{1}{2}\bigg[\sum_{j=1}^{i-2}N_j\int_{x_{i-3/2}}^{x_{i-1/2}}(x-x_{i-1})\beta(x-x_j,x_j)n(t,x-x_j)dx\\
&&+\int_{x_{i-1}}^{x_{i-1/2}}(x-x_{i-1})\beta(x-x_{i-1},x_{i-1})n(t,x-x_{i-1})n(t,x_{i-1})\Delta x_{i-1} dx].
\end{eqnarray*}
By setting $f(\cdot,y):=\beta(\cdot,y)n(t,\cdot)$, the above equation becomes
\begin{eqnarray}\label{1st eq f}
\tilde{V}_{i-1}-x_{i-1}\tilde{B}_{i-1}&=&\frac{1}{2}\bigg[\sum_{j=1}^{i-2}N_j\int_{x_{i-3/2}}^{x_{i-1/2}}(x-x_{i-1})f(x-x_j,x_j)dx\\
&&\nonumber+\int_{x_{i-1}}^{x_{i-1/2}}(x-x_{i-1})f(x-x_{i-1},x_{i-1})n(t,x_{i-1})\Delta x_{i-1} dx].
\end{eqnarray}
We use Taylor series expansions of each integrand about $x_{i-1}$ in equation (\ref{1st eq f}) as
\begin{eqnarray*}
(x-x_{i-1})f(x-x_j,x_j)&=&0+f(x_{i-1}-x_j,x_j)(x-x_{i-1})\\
&&+f_{x}(x_{i-1}-x_j,x_j)(x-x_{i-1})^2+{\mathcal O}(\Delta x^3),\\
(x-x_{i-1})f(x-x_{i-1},x_{i-1})&=&0+f(x_{i-1}-x_{i-1},x_{i-1})(x-x_{i-1})+{\mathcal O}(\Delta x^2).
\end{eqnarray*}
The substitution of the above Taylor series expansion in equation (\ref{1st eq f}) gives
\begin{eqnarray*}
\tilde{V}_{i-1}-x_{i-1}\tilde{B}_{i-1}&=&\frac{1}{2}\bigg[
\frac{1}{12}\sum_{j=1}^{i-2}N_j f_{x}(x_{i-1}-x_j,x_j)\Delta x_{i-1}^3\\
&&+\frac{1}{8}f(x_{i-1}-x_{i-1},x_{i-1})n(t,x_{i-1})\Delta x_{i-1}^3+{\mathcal O}(\Delta x^4)\bigg].
\end{eqnarray*}
Since $\beta(x_{i-1}-x_{i-1},x_{i-1})=\beta(0,x_{i-1})=0$, therefore we have $f(x_{i-1}-x_{i-1},x_{i-1})=0$. This implies that 
\begin{eqnarray*}
\tilde{V}_{i-1}-x_{i-1}\tilde{B}_{i-1}=\frac{1}{24}\sum_{j=1}^{i-2}N_j f_{x}(x_{i-1}-x_j,x_j)\Delta x_{i-1}^3 +{\mathcal O}(\Delta x^4).
\end{eqnarray*}
Again the application of Taylor series expansion gives us 
\begin{eqnarray}\label{final f}
\tilde{V}_{i-1}-x_{i-1}\tilde{B}_{i-1}=\frac{1}{24}\sum_{j=1}^{i-2}N_j f_{x}(x_{i}-x_j,x_j)\Delta x_{i-1}^3 +{\mathcal O}(\Delta x^4).
\end{eqnarray}
Finally, substituting (\ref{final f}) into (\ref{1st eq lambda}), we obtain
\begin{eqnarray}\label{2nd eq lambda}
 \lambda^{-}_{i}(\overline{v}_{i-1})\hat{B}_{i-1}&=&\frac{1}{12}\sum_{j=1}^{i-2}N_j f_{x}(x_{i}-x_j,x_j)\frac{\Delta x_{i-1}^3}{\Delta x_i + \Delta x_{i-1}}\\
&&-\sum_{j=1}^{i-2} N_j(t) \int_{x_{i-3/2}-x_j}^{x_{l_{i-1,j}+\frac{1}{2}\gamma_{i-1,j}}}\frac{(x'-x_{i-1}+x_j)}{\Delta x_i + \Delta x_{i-1}}f(x',x_j)dx'\nonumber\\
&&-\sum_{j=1}^{i-1}N_j(t)\int_{x_{l_{i,j}+\frac{1}{2}\gamma_{i,j}}}^{x_{i-1/2}-x_j}\frac{(x'-x_{i-1}+x_j)}{\Delta x_i + \Delta x_{i-1}}f(x',x_j)dx'\nonumber\\ 
&&-\frac{1}{12}\sum_{j=1}^{i-2}N_j(t)\sum_{k={l_{i-1,j}+\frac{1}{2}(\gamma_{i-1,j}+1)}}^{l_{i,j}+\frac{1}{2}(\gamma_{i,j}-1)}\frac{\Delta {x_k}^3}{\Delta x_i + \Delta x_{i-1}} 
f_{x'}(x_k,x_j)\nonumber\\
&& \nonumber+{\mathcal O}(\Delta x^3).
\end{eqnarray}
Next, the second term in (\ref{E:B_iCA}) is evaluated as 
\begin{eqnarray*}
 \lambda^{+}_i(\overline{v}_{i})\hat{B}_{i}&=&\frac{\overline{v}_{i}-x_{i+1}}{x_i-x_{i+1}}\hat{B}_{i}=\bigg(1-\frac{\overline{v}_{i}-x_{i}}{x_{i+1}-x_i}\bigg)\hat{B}_{i}\\
&=&\hat{B}_{i}-\frac{2}{\Delta x_{i+1} + \Delta x_{i}}(\hat{V}_{i}-x_{i}\hat{B}_{i}).
\end{eqnarray*}
Calculating as before, we estimate the above expression in the following form
 \begin{eqnarray}\label{3rd eq lambda}
 \lambda^{+}_{i}(\overline{v}_{i})\hat{B}_{i}&=&\hat{B}_{i}-\frac{1}{12}\sum_{j=1}^{i-1}N_j f_{x}(x_{i}-x_j,x_j)\frac{\Delta x_{i}^3}{\Delta x_i + \Delta x_{i+1}}\\
&&+\sum_{j=1}^{i-1} N_j(t) \int_{x_{i-1/2}-x_j}^{x_{l_{i,j}+\frac{1}{2}\gamma_{i,j}}}\frac{(x'-x_{i}+x_j)}{\Delta x_i + \Delta x_{i+1}}f(x',x_j)dx'\nonumber\\
&&+\sum_{j=1}^{i}N_j(t)\int_{x_{l_{i+1,j}+\frac{1}{2}\gamma_{i+1,j}}}^{x_{i+1/2}-x_j}\frac{(x'-x_{i}+x_j)}{\Delta x_i + \Delta x_{i+1}}f(x',x_j)dx'\nonumber\\ 
&&+\frac{1}{12}\sum_{j=1}^{i-1}N_j(t)\sum_{k={l_{i,j}+\frac{1}{2}(\gamma_{i,j}+1)}}^{l_{i+1,j}+\frac{1}{2}(\gamma_{i+1,j}-1)}\frac{\Delta {x_k}^3}{\Delta x_i + \Delta x_{i+1}} 
f_{x'}(x_k,x_j)\nonumber\\
&&\nonumber +{\mathcal O}(\Delta x^3).
\end{eqnarray}
Similar to the second term we obtain 
 \begin{eqnarray}\label{4th eq lambda}
 \lambda^{-}_{i}(\overline{v}_{i})\hat{B}_{i}&=&\hat{B}_{i}+\frac{1}{12}\sum_{j=1}^{i-1}N_j f_{x}(x_{i}-x_j,x_j)\frac{\Delta x_{i}^3}{\Delta x_i + \Delta x_{i-1}}\\
&&-\sum_{j=1}^{i-1} N_j(t) \int_{x_{i-1/2}-x_j}^{x_{l_{i,j}+\frac{1}{2}\gamma_{i,j}}}\frac{(x'-x_{i}+x_j)}{\Delta x_i + \Delta x_{i-1}}f(x',x_j)dx'\nonumber\\
&&-\sum_{j=1}^{i}N_j(t)\int_{x_{l_{i+1,j}+\frac{1}{2}\gamma_{i+1,j}}}^{x_{i+1/2}-x_j}\frac{(x'-x_{i}+x_j)}{\Delta x_i + \Delta x_{i-1}}f(x',x_j)dx'\nonumber\\ 
&&-\frac{1}{12}\sum_{j=1}^{i-1}N_j(t)\sum_{k={l_{i,j}+\frac{1}{2}(\gamma_{i,j}+1)}}^{l_{i+1,j}+\frac{1}{2}(\gamma_{i+1,j}-1)}\frac{\Delta {x_k}^3}{\Delta x_i + \Delta x_{i-1}} 
f_{x'}(x_k,x_j)\nonumber\\
&& \nonumber+{\mathcal O}(\Delta x^3).
\end{eqnarray}
Finally, similar to the first term we can easily estimate 
\begin{eqnarray}\label{5th eq lambda}
 \lambda^{+}_{i}(\overline{v}_{i+1})\hat{B}_{i+1}&=&\frac{1}{12}\sum_{j=1}^{i}N_j f_{x}(x_{i}-x_j,x_j)\frac{\Delta x_{i+1}^3}{\Delta x_i + \Delta x_{i+1}}\\
&&+\sum_{j=1}^{i} N_j(t) \int_{x_{i+1/2}-x_j}^{x_{l_{i+1,j}+\frac{1}{2}\gamma_{i+1,j}}}\frac{(x'-x_{i+1}+x_j)}{\Delta x_i + \Delta x_{i+1}}f(x',x_j)dx'\nonumber\\
&&+\sum_{j=1}^{i+1}N_j(t)\int_{x_{l_{i+2,j}+\frac{1}{2}\gamma_{i+2,j}}}^{x_{i+3/2}-x_j}\frac{(x'-x_{i+1}+x_j)}{\Delta x_i + \Delta x_{i+1}}f(x',x_j)dx'\nonumber\\ 
&&+\frac{1}{12}\sum_{j=1}^{i}N_j(t)\sum_{k={l_{i+1,j}+\frac{1}{2}(\gamma_{i+1,j}+1)}}^{l_{i+2,j}+\frac{1}{2}(\gamma_{i+2,j}-1)}\frac{\Delta {x_k}^3}{\Delta x_i + \Delta x_{i+1}} 
f_{x'}(x_k,x_j)\nonumber\\
&&\nonumber +{\mathcal O}(\Delta x^3).
\end{eqnarray}
By substituting (\ref{2nd eq lambda}), (\ref{3rd eq lambda}), (\ref{4th eq lambda}) and (\ref{5th eq lambda}) into 
(\ref{E:B_iCA}) and using (\ref{final B_i}), the local discretization error can be evaluated as follows

{\bf{Case I:}} $\overline{v}_{i-1}>x_{i-1}$, $\overline{v}_{i}>x_{i}$ and $\overline{v}_{i+1}\geq x_{i+1}$ 
\begin{eqnarray}\label{case1}
 \hat{B}^{CA}_i=B_i\hspace{3in}&&\\
\left.\begin{array}{lcr}
&&-\displaystyle\frac{1}{2}\sum_{j=1}^{i-1} N_j(t) \int_{x_{i-1/2}-x_j}^{x_{l_{i,j}+\frac{1}{2}\gamma_{i,j}}}f(x',x_j)dx'\nonumber\\
&&-\displaystyle\frac{1}{2}\sum_{j=1}^{i}N_j(t)\int_{x_{l_{i+1,j}+\frac{1}{2}\gamma_{i+1,j}}}^{x_{i+1/2}-x_j}f(x',x_j)dx'\phantom{\Bigg|}
\end{array}\right\}& =:E_1
\nonumber\\
\left.\begin{array}{rcl}
&&+\displaystyle\frac{1}{12}\bigg(\frac{\Delta x_{i-1}^3}{\Delta x_i + \Delta x_{i-1}}-\frac{\Delta x_{i}^3}{\Delta x_i + \Delta x_{i+1}}\bigg)\sum_{j=1}^{i}N_j f_{x'}(x_{i}-x_j,x_j)\phantom{\Bigg|}
\end{array}\right\}& =:E_2
\nonumber\\
\left.\begin{array}{lcr}
&&+\displaystyle\sum_{j=1}^{i-1} N_j(t) \int_{x_{i-1/2}-x_j}^{x_{l_{i,j}+\frac{1}{2}\gamma_{i,j}}}\frac{(x'-x_{i}+x_j)}{\Delta x_i + \Delta x_{i+1}}f(x',x_j)dx'\nonumber\\
&&+\displaystyle\sum_{j=1}^{i}N_j(t)\int_{x_{l_{i+1,j}+\frac{1}{2}\gamma_{i+1,j}}}^{x_{i+1/2}-x_j}\frac{(x'-x_{i}+x_j)}{\Delta x_i + \Delta x_{i+1}}f(x',x_j)dx'\nonumber\\ 
&&-\displaystyle\sum_{j=1}^{i-2} N_j(t) \int_{x_{i-3/2}-x_j}^{x_{l_{i-1,j}+\frac{1}{2}\gamma_{i-1,j}}}\frac{(x'-x_{i-1}+x_j)}{\Delta x_i + \Delta x_{i-1}}f(x',x_j)dx'\nonumber\\
&&-\displaystyle\sum_{j=1}^{i-1}N_j(t)\int_{x_{l_{i,j}+\frac{1}{2}\gamma_{i,j}}}^{x_{i-1/2}-x_j}\frac{(x'-x_{i-1}+x_j)}{\Delta x_i + \Delta x_{i-1}}f(x',x_j)dx'\phantom{\Bigg|}
\end{array}\right\}& =:E_3\nonumber
\end{eqnarray}
\begin{eqnarray}
\left.\begin{array}{rcl}
&&+\displaystyle\frac{1}{12}\sum_{j=1}^{i-1}N_j(t)\sum_{k={l_{i,j}+\frac{1}{2}(\gamma_{i,j}+1)}}^{l_{i+1,j}+\frac{1}{2}(\gamma_{i+1,j}-1)}\frac{\Delta {x_k}^3}{\Delta x_i + \Delta x_{i+1}} 
f_{x'}(x_k,x_j)\nonumber\\
&&-\displaystyle\frac{1}{12}\sum_{j=1}^{i-2}N_j(t)\sum_{k={l_{i-1,j}+\frac{1}{2}(\gamma_{i-1,j}+1)}}^{l_{i,j}+\frac{1}{2}(\gamma_{i,j}-1)}\frac{\Delta {x_k}^3}{\Delta x_i + \Delta x_{i-1}} 
f_{x'}(x_k,x_j)\phantom{\Bigg|}
\end{array}\right\}& =:E_4\nonumber\\ 
&& \nonumber\hspace{-6cm}+{\mathcal O}(\Delta x^3). 
\end{eqnarray}
{\bf{Case II:}} $\overline{v}_{i-1}\leq x_{i-1}$, $\overline{v}_{i}<x_{i}$ and $\overline{v}_{i+1}< x_{i+1}$ 

Similar to the previous case, we have 
\begin{eqnarray}\label{case2}
 \hat{B}^{CA}_i= B_i\hspace{3in}&&\\
\left.\begin{array}{lcr}
&&-\displaystyle\frac{1}{2}\sum_{j=1}^{i-1} N_j(t) \int_{x_{i-1/2}-x_j}^{x_{l_{i,j}+\frac{1}{2}\gamma_{i,j}}}f(x',x_j)dx'\nonumber\\
&&-\displaystyle\frac{1}{2}\sum_{j=1}^{i}N_j(t)\int_{x_{l_{i+1,j}+\frac{1}{2}\gamma_{i+1,j}}}^{x_{i+1/2}-x_j}f(x',x_j)dx'\phantom{\Bigg|}
\end{array}\right\}& =:E_1
\nonumber\\
\left.\begin{array}{rcl}
&&+\displaystyle\frac{1}{12}\bigg(\frac{\Delta x_{i}^3}{\Delta x_i + \Delta x_{i-1}}-\frac{\Delta x_{i+1}^3}{\Delta x_i + \Delta x_{i+1}}\bigg)\sum_{j=1}^{i}N_j f_{x'}(x_{i}-x_j,x_j)\phantom{\Bigg|}
\end{array}\right\}& =:E'_2
\nonumber\\
\left.\begin{array}{lcr}
&&+\displaystyle\sum_{j=1}^{i} N_j(t) \int_{x_{i+1/2}-x_j}^{x_{l_{i+1,j}+\frac{1}{2}\gamma_{i+1,j}}}\frac{(x'-x_{i+1}+x_j)}{\Delta x_i + \Delta x_{i+1}}f(x',x_j)dx'\nonumber\\
&&+\displaystyle\sum_{j=1}^{i+1}N_j(t)\int_{x_{l_{i+2,j}+\frac{1}{2}\gamma_{i+2,j}}}^{x_{i+3/2}-x_j}\frac{(x'-x_{i+1}+x_j)}{\Delta x_i + \Delta x_{i+1}}f(x',x_j)dx'\nonumber\\ 
&&-\displaystyle\sum_{j=1}^{i-1} N_j(t) \int_{x_{i-1/2}-x_j}^{x_{l_{i,j}+\frac{1}{2}\gamma_{i,j}}}\frac{(x'-x_{i}+x_j)}{\Delta x_i + \Delta x_{i-1}}f(x',x_j)dx'\nonumber\\
&&-\displaystyle\sum_{j=1}^{i}N_j(t)\int_{x_{l_{i+1,j}+\frac{1}{2}\gamma_{i+1,j}}}^{x_{i+1/2}-x_j}\frac{(x'-x_{i}+x_j)}{\Delta x_i + \Delta x_{i-1}}f(x',x_j)dx'\phantom{\Bigg|}
\end{array}\right\}& =:E'_3\nonumber\\ 
\left.\begin{array}{rcl}
&&+\displaystyle\frac{1}{12}\sum_{j=1}^{i}N_j(t)\sum_{k={l_{i+1,j}+\frac{1}{2}(\gamma_{i+1,j}+1)}}^{l_{i+2,j}+\frac{1}{2}(\gamma_{i+2,j}-1)}\frac{\Delta {x_k}^3}{\Delta x_i + \Delta x_{i+1}} 
f_{x'}(x_k,x_j)\nonumber\\
&&-\displaystyle\frac{1}{12}\sum_{j=1}^{i-1}N_j(t)\sum_{k={l_{i,j}+\frac{1}{2}(\gamma_{i,j}+1)}}^{l_{i+1,j}+\frac{1}{2}(\gamma_{i+1,j}-1)}\frac{\Delta {x_k}^3}{\Delta x_i + \Delta x_{i-1}} 
f_{x'}(x_k,x_j)\phantom{\Bigg|}
\end{array}\right\}& =:E'_4\nonumber\\ 
&&\nonumber \hspace{-6cm}+{\mathcal O}(\Delta x^3). 
\end{eqnarray}
{\bf{Case III:}} $\overline{v}_{i-1}\leq x_{i-1}$, $\overline{v}_{i}=x_{i}$ and $\overline{v}_{i+1}\geq x_{i+1}$ 
\begin{eqnarray}\label{case3}
 \hat{B}^{CA}_i= B_i\hspace{3in}&&\\
\left.\begin{array}{lcr}
&&-\displaystyle\frac{1}{2}\sum_{j=1}^{i-1} N_j(t) \int_{x_{i-1/2}-x_j}^{x_{l_{i,j}+\frac{1}{2}\gamma_{i,j}}}f(x',x_j)dx'\nonumber\\
&&-\displaystyle\frac{1}{2}\sum_{j=1}^{i}N_j(t)\int_{x_{l_{i+1,j}+\frac{1}{2}\gamma_{i+1,j}}}^{x_{i+1/2}-x_j}f(x',x_j)dx'\phantom{\Bigg|}
\end{array}\right\}& =:E_1
\nonumber\\
&& \nonumber\hspace{-6cm}+{\mathcal O}(\Delta x^3). 
\end{eqnarray}

Next, the discretization error for death term is calculated in the $i$th cell. From equation (\ref{E:D_i}), the integrated death term can be written as follows
\begin{eqnarray*}
D_i = \int_{x_{i-1/2}}^{x_{i+1/2}}\sum_{j=1}^{I}\int_{x_{j-1/2}}^{x_{j+1/2}}K(x, y)n(t,y)n(t,x)dy dx.
\end{eqnarray*}
The application of the midpoint rule to the outer and inner integrals gives us
%
\begin{eqnarray}\label{death term for i}
D_i = N_i(t)\sum_{j=1}^{I}K(x_i, x_j)N_j(t) + {\mathcal O}(\Delta x^3)=\hat{D}_i + {\mathcal O}(\Delta x^3).
\end{eqnarray}
From the equations (\ref{case1}-\ref{case3}) and (\ref{death term for i}), we can
estimate the local discretization error $\sigma _i(t)=(B_i-D_i)-(\hat{B}^{CA}_i-\hat{B}^{CA}_i)$ as
\begin{eqnarray}\label{local error}
\sigma_i(t) = 
\begin{cases}
E_1+E_2+E_3+E_4+{\mathcal O}(\Delta x^3)\ \, & \text{if}\ i \in \mathfrak{A_1},\\
 E_1+E'_2+E'_3+E'_4+{\mathcal O}(\Delta x^3)\ \, & \text{if}\ i \in \mathfrak{A_2},\\
E_1+{\mathcal O}(\Delta x^3)\ \, & \text{if}\ i \in \mathfrak{A_3}.
\end{cases}
\end{eqnarray}
where
\begin{eqnarray*}
\mathfrak{A_1} &= \{i \in \mathbb{N}\, | \,\bar{v}_{i-1}>x_{i-1}, \bar{v}_{i}>x_{i}, \bar{v}_{i+1}\geq x_{i+1}\}, \\
\mathfrak{A_2} &= \{i \in \mathbb{N}\, | \,\bar{v}_{i-1}\leq x_{i-1},\bar{v}_{i}<x_{i}, \bar{v}_{i+1}<x_{i+1}\}, \\
\mathfrak{A_3} &= \{i \in \mathbb{N}\, | \,\bar{v}_{i-1} \leq x_{i-1},\bar{v}_{i}=x_{i}, \bar{v}_{i+1}\geq x_{i+1}\}.
\end{eqnarray*}
Here we consider three different cases to find the order of consistency. Then, the order of consistency is given by
\begin{eqnarray}\label{consistency order}
\|\sigma(t)\| =  \sum_{i\in \mathfrak{A_1}}|\sigma_i(t)| + \sum_{i\in \mathfrak{A_2}}|\sigma_i(t)| + \sum_{i\in \mathfrak{A_3}}|\sigma_i(t)|.
\end{eqnarray}
The following types of grids will be considered to find the order of consistency of CAT. 

\subsection{Uniform grids} 

Let us begin with the case of uniform grids i.e.
\ $\Delta x_i = \Delta x \ \ \mbox{and}\ \ x_i=(i-1/2)\Delta x\ \ \mbox{for any}\ \ i=1, \ldots, I$. Here, $E_2$ and $E'_2$ defined in 
(\ref{case1}) and (\ref{case2}), respectively, are obviously zero. In case of such uniform grids, we have
$$x_{i-1/2}-x_j= x_{i-j},\hspace{.1in} x_{i+1/2}-x_j= x_{i-j+1},\hspace{.1in} \mbox{and} \hspace{.1in}x_{i-3/2}-x_j= x_{i-j-1}.$$
By using the definition of indices $l_{i,j}$ and $\gamma_{i,j}$ in (\ref{defi of index}), we calculate 
$$x_{i-1/2}-x_j= x_{i-j} \in \Lambda_{l_{i,j}},$$
which gives 
$$x_{i-1/2}-x_j= x_{i-j}=x_{l_{i,j}}.$$
Similarly, we obtain  
$$x_{i+1/2}-x_j= x_{i-j+1}=x_{l_{i+1,j}},$$
and 
$$x_{i-3/2}-x_j= x_{i-j-1}=x_{l_{i-1,j}}.$$
This shows that $\gamma_{i-1,j}= \gamma_{i,j}=\gamma_{i+1,j}=0$. Therefore, in (\ref{case1})-(\ref{case2}), the error terms $E_1$, $E_3$ and $E'_3$ become zero. It can also 
be easily realized that $x_{i-3/2}-x_j$, $x_{i-1/2}-x_j$, and $x_{i+1/2}-x_j$ are the pivot points 
of the adjacent cells, i.e.\ $l_{i-1,j}=(i-j-1)$th, $l_{i,j}=(i-j)$th and $l_{i+1,j}=(i-j+1)$th cells, respectively. Thus, by 
substituting the values of all these indices in $E_4$ and $E'_4$ defined in (\ref{case1}) and (\ref{case2}), respectively, and applying the Taylor series expansion, we obtain 
$E_4={\mathcal O}(\Delta x^3)$ and $E'_4={\mathcal O}(\Delta x^3)$. Then, from (\ref{local error}), we obtain
\begin{eqnarray*}
\sigma_i(t) = {\mathcal O}(\Delta x^3)\ \,  \text{if}\ \ i\in \mathfrak{A_1}, \mathfrak{A_2}, \mathfrak{A_3}.
\end{eqnarray*}
By using (\ref{consistency order}), the order of consistency is thus given by 
\begin{eqnarray*}
\|\sigma(t)\| ={\mathcal O}(\Delta x^2).
\end{eqnarray*}
Therefore, the cell average technique is second order consistent on uniform grids.

\subsection{Non-uniform smooth grids}
Non-uniform smooth grids can be obtained by applying some smooth transformation to uniform grids. Assume a variable $\xi$ 
with uniform grids and a smooth transformation $x=g(\xi)$ such that $x_{i\pm \frac{1}{2}} =g(\xi_{i\pm \frac{1}{2}})$ for any 
$i=1,\ldots, I$ to get non-uniform smooth grids. In this case, we show that the scheme is first order consistent. Let $h$ be 
the uniform mesh width in the variable $\xi$. For such type of smooth grids, Taylor series expansions in smooth 
transformations give 
$$\Delta x _i=x_{i+\frac{1}{2}}-x_{i-\frac{1}{2}}=g(\xi_i + \frac{h}{2})-g(\xi_i-\frac{h}{2})=hg'(\xi_i)+ \mathcal{O}(h^3).$$
Hence, by calculating $\Delta x_{i-1}$ and $\Delta x_{i+1}$ similar to $\Delta x_i$, we obtain
$$\Delta x_i -\Delta x_{i-1}=\mathcal{O}(h^2),$$
$$\Delta x_i +\Delta x_{i+1}=h[g'(\xi_i)+g'(\xi_{i+1})]+\mathcal{O}(h^3)=2hg'(\xi_i)+\mathcal{O}(h^2),$$
and similarly, we have
$$\Delta x_i +\Delta x_{i-1}=h[g'(\xi_i)+g'(\xi_{i-1})]+\mathcal{O}(h^3)=2hg'(\xi_i)+\mathcal{O}(h^2).$$
In particular, we deal with a special type of non-uniform smooth grids  which is known as {\bf\textit{geometric grids}}. Such type of 
grids can be defined as $x_{i+\frac{1}{2}}=rx_{i-\frac{1}{2}}$, $r>1$, $i=1,\ldots, I$. An exponential function 
can be applied on uniform grids as a smooth transformation to construct such type of geometric grids. Mathematically, we write
\begin{eqnarray*} 
x_{i+1/2} = \exp(\xi_{i+1/2})& = &\exp(h +\xi_{i-1/2}) \\
&=& \exp(h)\exp(\xi_{i-1/2})\\
&=&\exp(h) x_{i-1/2} =: r x_{i-1/2}, \, r>1.
\end{eqnarray*}
To solve the error terms appearing in (\ref{local error}), let us further assume that $\xi_{11}$, $\xi_{12}$, $\xi_{21}$, $\xi_{22}$, $\xi_{31}$ and $\xi_{32}$ are corresponding
points on uniform mesh for $x_{l_{i+1,j+1}+ \frac{1}{2}\gamma_{i+1,j+1}}$, $x_{i+1/2}-x_{j+1}$,
$x_{l_{i,j}+ \frac{1}{2}\gamma_{i,j}}$, $x_{i-1/2}-x_{j}$, $x_{l_{i-1,j-1}+ \frac{1}{2}\gamma_{i-1,j-1}}$ and $x_{i-3/2}-x_{j-1}$, respectively. Due to an application of exponential 
smooth transformation, these points can be defined as
$$\xi_{11}=\mbox{ln}(x_{l_{i+1,j+1}+\frac{1}{2}\gamma_{i+1,j+1}}), \ldots, \xi_{32}=\mbox{ln}(x_{i-3/2}-x_{j-1}).$$
By the definition of the indices in (\ref{defi of index}), we know
$$x_{i+1/2}-x_{j+1} \in \Lambda_{l_{i+1,j+1}}, \ \ x_{i-1/2}-x_{j} \in \Lambda_{l_{i,j}}\ \ \mbox{and} \ \ x_{i-3/2}-x_{j-1} \in \Lambda_{l_{i-1,j-1}}.$$
For geometric grids, we have
$$x_{i+1/2}-x_{j+1}=r(x_{i-1/2}-x_{j})=r^2(x_{i-3/2}-x_{j-1}).$$
Therefore, we have
$$l_{i+1,j+1}=l_{i,j}+1=l_{i-1,j-1}+2.$$
Further, in case of geometric grids, we have
$$\gamma_{i+1,j+1}=\gamma_{i,j}=\gamma_{i-1,j-1}.$$
Let us consider
\begin{eqnarray*}
 h_1=\xi_{11}-\xi_{12}&=&\mbox{ln}(x_{l_{i+1,j+1}+\frac{1}{2}\gamma_{i+1,j+1}})-\mbox{ln}(x_{i+1/2}-x_{j+1})\\
&&=\mbox{ln}\bigg(\frac{x_{l_{i+1,j+1}+\frac{1}{2}\gamma_{i+1,j+1}}}{x_{i+1/2}-x_{j+1}}\bigg)=\mbox{ln}\bigg(\frac{x_{l_{i,j}+\frac{1}{2}\gamma_{i,j}}}{x_{i-1/2}-x_{j}}\bigg)=\xi_{21}-\xi_{22}\\
&&=\mbox{ln}\bigg(\frac{x_{l_{i-1,j-1}+\frac{1}{2}\gamma_{i-1,j-1}}}{x_{i-3/2}-x_{j-1}}\bigg)=\xi_{31}-\xi_{32}.
\end{eqnarray*}
Similarly, we estimate
\begin{eqnarray*}
 \xi_{12}-\xi_{22}= \mbox{ln}\bigg(\frac{x_{i+1/2}-x_{j+1}}{x_{i-1/2}-x_{j}}\bigg)=\mbox{ln}(r)=h,
\end{eqnarray*}
and 
\begin{eqnarray*}
 \xi_{22}-\xi_{32}= \mbox{ln}\bigg(\frac{x_{i-1/2}-x_{j}}{x_{i-3/2}-x_{j-1}}\bigg)=\mbox{ln}(r)=h.
\end{eqnarray*}
Again, by application of smooth transformation, we can easily obtain 
\begin{eqnarray}\label{smooth12}
x_{l_{i+1,j+1}+\frac{1}{2}\gamma_{i+1,j+1}}-(x_{i+1/2}-x_{j+1})=g(\xi_{11})-g(\xi_{12})=h_1g'(\xi_{12}) + {\mathcal O}(h^2),
\end{eqnarray}
\begin{eqnarray}\label{smooth22}
 x_{l_{i,j}+\frac{1}{2}\gamma_{i,j}}-(x_{i-1/2}-x_{j})=g(\xi_{21})-g(\xi_{22})=h_1g'(\xi_{22}) + {\mathcal O}(h^2),
\end{eqnarray}
and
\begin{eqnarray}\label{smooth32}
x_{l_{i-1,j-1}+\frac{1}{2}\gamma_{i-1,j-1}}-(x_{i-3/2}-x_{j-1})=g(\xi_{31})-g(\xi_{32})=h_1g'(\xi_{32}) + {\mathcal O}(h^2).
\end{eqnarray}
All these identities will play an important role to solve the error terms involved in (\ref{local error}), which helps us 
to calculate the order of local discretization error $\sigma_i$.

We first evaluate $E_1$ as follows
\begin{eqnarray*}
 E_1=\frac{1}{2}\sum_{j=1}^{i-1} N_j(t) \int_{x_{i-1/2}-x_j}^{x_{l_{i,j}+\frac{1}{2}\gamma_{i,j}}}f(x,x_j)dx+\frac{1}{2}\sum_{j=1}^{i}N_j(t)\int_{x_{l_{i+1,j}+\frac{1}{2}\gamma_{i+1,j}}}^{x_{i+1/2}-x_j}f(x,x_j)dx.
\end{eqnarray*}
Applying the left and right rectangle rules in the integrals involved in the first and second terms, respectively, on the right-hand side, we estimate 
\begin{eqnarray*}
 E_1&=&\frac{1}{2}\sum_{j=1}^{i-1} N_j(t) f(x_{i-1/2}-x_j, x_j)(x_{l_{i,j}+\frac{1}{2}\gamma_{i,j}}-x_{i-1/2}+x_{j})\\
&&+\frac{1}{2}\sum_{j=1}^{i}N_j(t)f(x_{i+1/2}-x_j, x_j)(x_{i+1/2}-x_{j}-x_{l_{i+1,j}+\frac{1}{2}\gamma_{i+1,j}}) + \mathcal{O}(\Delta x^2).
\end{eqnarray*}
Then an application of Taylor's series expansion about $x_{i-1/2}=x_{i+1/2}$ in $f(x_{i-1/2}-x_{j})$ gives 
 \begin{eqnarray*}
 E_1&=&\frac{1}{2}\sum_{j=1}^{i-1} N_j(t) f(x_{i+1/2}-x_j, x_j)(x_{l_{i,j}+\frac{1}{2}\gamma_{i,j}}-x_{i-1/2}+x_{j})\\
&&-\frac{1}{2}\sum_{j=1}^{i}N_j(t)f(x_{i+1/2}-x_j, x_j)(x_{l_{i+1,j}+\frac{1}{2}\gamma_{i+1,j}}-x_{i+1/2}+x_{j}) + \mathcal{O}(\Delta x^2).
\end{eqnarray*}
We replace $j$ by $j+1$ in the second term on the right-hand side and use the relationship $N_j(t)=n(t, x_j)\Delta x_j + \mathcal{O}(\Delta x^3)$ for the 
midpoint rule. Also, we drop the term which is of second order, and obtain 
 \begin{eqnarray*}
 E_1&=&\frac{1}{2}\sum_{j=1}^{i-1} n(t, x_j)\Delta x_j  f(x_{i+1/2}-x_j, x_j)(x_{l_{i,j}+\frac{1}{2}\gamma_{i,j}}-x_{i-1/2}+x_{j})\\
&&-\frac{1}{2}\sum_{j=1}^{i-1}n(t, x_{j+1})\Delta x_{j+1} f(x_{i+1/2}-x_{j+1}, x_{j+1})(x_{l_{i+1,j+1}+\frac{1}{2}\gamma_{i+1,j+1}}-x_{i+1/2}+x_{j+1})\\ 
&& + \mathcal{O}(\Delta x^2).
\end{eqnarray*}
Approximating the function $x \mapsto n(t,x)f(x_{i\pm1/2}-x, x)$ at $x_j$ by $n(t,x)f(x_{i\pm1/2}-x, x)$ evaluated at $x=x_{j+1}$ in the first term, we evaluate 
\begin{eqnarray}\label{E1}
 \hspace{.4in}E_1&=&\frac{1}{2}\sum_{j=1}^{i-1}\{\Delta x_j(x_{l_{i,j}+\frac{1}{2}\gamma_{i,j}}-x_{i-1/2}+x_{j})-\Delta x_{j+1} (x_{l_{i+1,j+1}+\frac{1}{2}\gamma_{i+1,j+1}}-x_{i+1/2}+x_{j+1})\}\\
&&\nonumber \hspace{.6in}\times n(t, x_{j+1})f(x_{i+1/2}-x_{j+1}, x_{j+1}) + \mathcal{O}(\Delta x^2).
\end{eqnarray}
By using the identities in the beginning of this section, we calculate
\begin{eqnarray}\label{cal E1}
 \hspace{.4in}\Delta x_j&&(x_{l_{i,j}+\frac{1}{2}\gamma_{i,j}}-x_{i-1/2}+x_{j})-\Delta x_{j+1} (x_{l_{i+1,j+1}+\frac{1}{2}\gamma_{i+1,j+1}}-x_{i+1/2}+x_{j+1})\\
= &&(\Delta x_j-\Delta x_{j+1})(x_{l_{i,j}+\frac{1}{2}\gamma_{i,j}}-x_{i-1/2}+x_{j})\nonumber\\ 
&&-\Delta x_{j+1} [(x_{l_{i+1,j+1}+\frac{1}{2}\gamma_{i+1,j+1}}-x_{i+1/2}+x_{j+1})-(x_{l_{i,j}+\frac{1}{2}\gamma_{i,j}}-x_{i-1/2}+x_{j})]\nonumber\\
=&&\mathcal{O}(h^2)\{h_1g'(\xi_{22}) + {\mathcal O}(h^2)\}-\{hg'(\xi_{j+1})+\mathcal {O}(h^3)\}[h_1\{g'(\xi_{12})-g'(\xi_{22})\}+ {\mathcal O}(h^2)]\nonumber\\
=&& \mathcal {O}(h^3)-\mathcal{O}(h)[h_1hg'(\xi_{22})+ {\mathcal O}(h^2)]\nonumber\\
 \nonumber=&&\mathcal {O}(h^3).
\end{eqnarray}
Therefore, substituting (\ref{cal E1}) in (\ref{E1}), we obtain 
\begin{eqnarray}\label{final E1}
 E_1= {\mathcal O}(\Delta x^2).
\end{eqnarray}
Next, let us calculate $E_3$ defined in (\ref{case1}) as follows
\begin{eqnarray*}
E_3&=&\sum_{j=1}^{i-1} N_j(t) \int_{x_{i-1/2}-x_j}^{x_{l_{i,j}+\frac{1}{2}\gamma_{i,j}}}\frac{(x-x_{i}+x_j)}{\Delta x_i + \Delta x_{i+1}}f(x,x_j)dx\nonumber\\
&&+\sum_{j=1}^{i}N_j(t)\int_{x_{l_{i+1,j}+\frac{1}{2}\gamma_{i+1,j}}}^{x_{i+1/2}-x_j}\frac{(x-x_{i}+x_j)}{\Delta x_i + \Delta x_{i+1}}f(x,x_j)dx\nonumber\\ 
&&-\sum_{j=1}^{i-2} N_j(t) \int_{x_{i-3/2}-x_j}^{x_{l_{i-1,j}+\frac{1}{2}\gamma_{i-1,j}}}\frac{(x-x_{i-1}+x_j)}{\Delta x_i + \Delta x_{i-1}}f(x,x_j)dx\nonumber\\
&&-\sum_{j=1}^{i-1}N_j(t)\int_{x_{l_{i,j}+\frac{1}{2}\gamma_{i,j}}}^{x_{i-1/2}-x_j}\frac{(x-x_{i-1}+x_j)}{\Delta x_i + \Delta x_{i-1}}f(x,x_j)dx.
\end{eqnarray*}
Applying the left rectangle rule to the integrals appearing in first and third terms, and the right rectangle rule to the integrals in second and fourth terms, we estimate 
\begin{eqnarray*}
 E_3&=&-\frac{1}{2}\sum_{j=1}^{i-1} N_j(t)f(x_{i-1/2}-x_j,x_j)\frac{\Delta x_i }{\Delta x_i + \Delta x_{i+1}}(x_{l_{i,j}+\frac{1}{2}\gamma_{i,j}}-x_{i-1/2}+x_{j})\\
&&-\frac{1}{2}\sum_{j=1}^{i-1} N_j(t)f(x_{i+1/2}-x_j,x_j)\frac{\Delta x_i }{\Delta x_i + \Delta x_{i+1}}(x_{l_{i+1,j}+\frac{1}{2}\gamma_{i+1,j}}-x_{i+1/2}+x_{j})\\
&&+\frac{1}{2}\sum_{j=1}^{i-2} N_j(t)f(x_{i-3/2}-x_j,x_j)\frac{\Delta x_{i-1} }{\Delta x_i + \Delta x_{i-1}}(x_{l_{i-1,j}+\frac{1}{2}\gamma_{i-1,j}}-x_{i-3/2}+x_{j})\\
&&+\frac{1}{2}\sum_{j=1}^{i-1} N_j(t)f(x_{i-1/2}-x_j,x_j)\frac{\Delta x_{i-1} }{\Delta x_i +x_{i-1}}(x_{l_{i,j}+\frac{1}{2}\gamma_{i,j}}-x_{i-1/2}+x_{j}) + \mathcal{O}(\Delta x^2).
\end{eqnarray*}
Let us approximate $f$ at $(x_{i-3/2}-x_j,x_j)$ by $f$ expanded around $(x_{i-1/2}-x_j,x_j)$ in the third term and $f$ at $(x_{i-1/2}-x_j,x_j)$ by $f$ expanded around $(x_{i+1/2}-x_j,x_j)$ 
in the fourth term. Further, we replace $j$ by $j+1$ and $j-1$ respectively in second and third terms. Also, the relationship $N_j(t)=n(t,x_j)\Delta x_j + \mathcal{O}(\Delta x^3)$ is used to get 
\begin{eqnarray*}
 E_3&=&-\frac{1}{2}\sum_{j=1}^{i-1} n(t,x_j)\Delta x_j f(x_{i-1/2}-x_j,x_j)\frac{\Delta x_i }{\Delta x_i + \Delta x_{i+1}}(x_{l_{i,j}+\frac{1}{2}\gamma_{i,j}}-x_{i-1/2}+x_{j})\\
&&-\frac{1}{2}\sum_{j=0}^{i-2} n(t,x_{j+1})\Delta x_{j+1} f(x_{i+1/2}-x_{j+1},x_{j+1})\\
&&\hspace{.2in}\times \frac{\Delta x_i }{\Delta x_i + \Delta x_{i+1}}(x_{l_{i+1,j+1}+\frac{1}{2}\gamma_{i+1,j+1}}-x_{i+1/2}+x_{j+1})\\
&&+\frac{1}{2}\sum_{j=2}^{i-1} n(t,x_{j-1})\Delta x_{j-1} f(x_{i-1/2}-x_{j-1},x_{j-1})\\
&&\hspace{.2in}\times\frac{\Delta x_{i-1} }{\Delta x_i + \Delta x_{i-1}}(x_{l_{i-1,j-1}+\frac{1}{2}\gamma_{i-1,j-1}}-x_{i-3/2}+x_{j-1})\\
&&+\frac{1}{2}\sum_{j=1}^{i-2} n(t,x_j)\Delta x_j f(x_{i+1/2}-x_j,x_j)\frac{\Delta x_{i-1} }{\Delta x_i +x_{i-1}}(x_{l_{i,j}+\frac{1}{2}\gamma_{i,j}}-x_{i-1/2}+x_{j})\\
&& +\mathcal{O}(\Delta x^2).
\end{eqnarray*}
Without loss of generality, we can drop the terms which are second order accurate. Moreover, we approximate the functions $x \mapsto n(t,x)f(x_{i\mp1/2}-x,x)$ at point $x_j$ by 
$n(t,x)f(x_{i\mp1/2}-x,x)$ evaluated at points $x=x_{j\mp 1}$ of the first  and fourth terms, respectively, to obtain
\begin{eqnarray*}
 E_3&=&-\frac{1}{2}\sum_{j=2}^{i-1} n(t,x_{j-1}) f(x_{i-1/2}-x_{j-1},x_{j-1})\frac{\Delta x_i \Delta x_j }{\Delta x_i + \Delta x_{i+1}}(x_{l_{i,j}+\frac{1}{2}\gamma_{i,j}}-x_{i-1/2}+x_{j})\\
&&-\frac{1}{2}\sum_{j=1}^{i-2} n(t,x_{j+1}) f(x_{i+1/2}-x_{j+1},x_{j+1})\\
&&\hspace{.2in}\times \frac{\Delta x_i \Delta x_{j+1}}{\Delta x_i + \Delta x_{i+1}}(x_{l_{i+1,j+1}+\frac{1}{2}\gamma_{i+1,j+1}}-x_{i+1/2}+x_{j+1})\\
&&+\frac{1}{2}\sum_{j=2}^{i-1} n(t,x_{j-1}) f(x_{i-1/2}-x_{j-1},x_{j-1})\\
&&\hspace{.2in}\times\frac{\Delta x_{i-1} \Delta x_{j-1}}{\Delta x_i + \Delta x_{i-1}}(x_{l_{i-1,j-1}+\frac{1}{2}\gamma_{i-1,j-1}}-x_{i-3/2}+x_{j-1})\\
&&+\frac{1}{2}\sum_{j=1}^{i-2} n(t,x_{j+1}) f(x_{i+1/2}-x_{j+1},x_{j+1})\frac{\Delta x_{i-1} \Delta x_j}{\Delta x_i +x_{i-1}}(x_{l_{i,j}+\frac{1}{2}\gamma_{i,j}}-x_{i-1/2}+x_{j})\\
 &&+ \mathcal{O}(\Delta x^2).
\end{eqnarray*}
Let us denote each summation with the factor $\frac{1}{2}$ on the right-hand side by $E_{11},\ldots, E_{14}$ respectively. Therefore, we can write
\begin{eqnarray}\label{E3}
 E_3=(E_{33}-E_{31})+ (E_{34}-E_{32}) + \mathcal{O}(\Delta x^2).
\end{eqnarray}
To simplify (\ref{E3}), we first calculate $E_{33}-E_{31}$ as follows
\begin{eqnarray}\label{E331}
 E_{33}-E_{31}&=&\frac{1}{2}\sum_{j=2}^{i-1} n(t,x_{j-1}) f(x_{i-1/2}-x_{j-1},x_{j-1})\\
&&\hspace{.2in}\times\bigg\{\frac{\Delta x_{i-1} \Delta x_{j-1}}{\Delta x_i + \Delta x_{i-1}}(x_{l_{i-1,j-1}+\frac{1}{2}\gamma_{i-1,j-1}}-x_{i-3/2}+x_{j-1})\nonumber\\
&&\nonumber\hspace{.4in}- \frac{\Delta x_i \Delta x_j }{\Delta x_i + \Delta x_{i+1}}(x_{l_{i,j}+\frac{1}{2}\gamma_{i,j}}-x_{i-1/2}+x_{j})\bigg\}.
\end{eqnarray}
Again by using the identities mentioned in the beginning of this section, we need to estimate the following term 
for solving (\ref{E331}). 
\begin{eqnarray*}
&&\Delta x_{i-1} \Delta x_{j-1}(\Delta x_i + \Delta x_{i+1})(x_{l_{i-1,j-1}+\frac{1}{2}\gamma_{i-1,j-1}}-x_{i-3/2}+x_{j-1})\nonumber\\
&&\hspace{.2in}- \Delta x_i \Delta x_j(\Delta x_i + \Delta x_{i-1}) (x_{l_{i,j}+\frac{1}{2}\gamma_{i,j}}-x_{i-1/2}+x_{j})\\
&=&\Delta x_{i-1} \underset{=\mathcal{O}(h^2)}{\underbrace{(\Delta x_{j-1}-\Delta x_j)}}(\Delta x_i + \Delta x_{i+1})(x_{l_{i-1,j-1}+\frac{1}{2}\gamma_{i-1,j-1}}-x_{i-3/2}+x_{j-1})\\
&&+ \Delta x_j [\Delta x_{i-1}(\Delta x_i + \Delta x_{i+1})(x_{l_{i-1,j-1}+\frac{1}{2}\gamma_{i-1,j-1}}-x_{i-3/2}+x_{j-1})\\
&&\hspace{.2in}- \Delta x_i (\Delta x_i + \Delta x_{i-1}) (x_{l_{i,j}+\frac{1}{2}\gamma_{i,j}}-x_{i-1/2}+x_{j})]\\
&=& \mathcal{O}(h^5) + (hg'(\xi_j)+ \mathcal{O}(h^3))[(\underset{=hg'(\xi_i)+\mathcal{O}(h^2)}{\underbrace{hg'(\xi_{i-1})}}+ \mathcal{O}(h^3))(2hg'(\xi_i)+ \mathcal{O}(h^2))(h_1g'(\xi_{32})+ \mathcal{O}(h^2))\\
&&\hspace{1in}- (hg'(\xi_{i})+ \mathcal{O}(h^3))(2hg'(\xi_{i})+ \mathcal{O}(h^2))(h_1g'(\xi_{22})+ \mathcal{O}(h^2))]\\
&=& \mathcal{O}(h)[2h^2(g'(\xi_i))^2 h_1\{g'(\xi_{32})-g'(\xi_{22})\}]+ \mathcal{O}(h^5)\\
&=& \mathcal{O}(h)\cdot 2h^3h_1(g'(\xi_i))^2 g'(\xi_{32}) + \mathcal{O}(h^5)\\
&=& \mathcal{O}(h^5).
\end{eqnarray*}
Inserting this estimate in (\ref{E331}), we obtain 
\begin{eqnarray}\label{E331fin}
  E_{33}-E_{31}= \mathcal{O}(h^2).
\end{eqnarray}
Analogous to (\ref{E331fin}), we can easily show that
\begin{eqnarray}\label{E342}
 E_{34}-E_{32}= \mathcal{O}(h^2).
\end{eqnarray}
Finally, substituting (\ref{E331fin}) and (\ref{E342}) into (\ref{E3}), we have
\begin{eqnarray}\label{E3final}
 E_{3}= \mathcal{O}(\Delta x^2).
\end{eqnarray}
In a similar way, we can prove that
\begin{eqnarray}\label{E3final1}
 E'_{3}= \mathcal{O}(\Delta x^2).
\end{eqnarray}
Next, it can be easily observed from (\ref{case1}) and (\ref{case2}) that the error terms $E_2$, $E'_2$, $E_4$ and $E'_4$ are second order accurate independent of meshes. Therefore, by 
substituting (\ref{final E1}), (\ref{E3final}) and (\ref{E3final1}) into (\ref{local error}), we have
\begin{eqnarray*}
 \sigma_i(t)=\mathcal{O}(\Delta x^2) \ \ \mbox{if}\ \ i\in \mathfrak{A_1}, \mathfrak{A_2}, \mathfrak{A_3}.
\end{eqnarray*}
Thus, using (\ref{consistency order}), we obtain 
\begin{eqnarray*}
 \|\sigma(t)\|=\mathcal{O}(\Delta x).
\end{eqnarray*}
This shows that the cell average technique is first order consistent on such type of non-uniform smooth grids.
\begin{remark}
It should be pointed out that, due to the cancellation of second order terms, the error terms $E_2$, $E'_2$, $E_3$, $E'_3$, $E_4$ and $E'_4$ 
can be shown third order accurate on geometric grids. However, since this will not improve the order of consistency (because $E_1$ is only 
second order accurate for such grids), we do not include further calculations.
\end{remark}

\section{Lipschitz conditions on $\hat{\mathbf{B}}({\mathbf{N}}(t))$ and $\hat{\mathbf{D}}({\mathbf{N}}(t))$}\label{convergence}
Let us consider the birth term for $0 \leq t \leq T$ and for all $\mathbf{N}$, $\hat{\mathbf{N}}\in \mathbb{R}^I$. We get from (\ref{E:B_iCA})
\begin{eqnarray*}
 \|\hat{\mathbf{B}}({\mathbf{N}})-\hat{\mathbf{B}}({\hat{\mathbf{N}}})\|\leq &&\sum_{i=1}^{I}\lambda^{-}_i(\overline{v}_{i-1})H(\overline{v}_{i-1}-x_{i-1})|\hat{B}_{i-1}(\mathbf{N})-\hat{B}_{i-1}(\hat{\mathbf{N}})|\\
 &&+ \sum_{i=1}^{I}[\lambda^{+}_i(\overline{v}_{i})H(\overline{v}_{i}-x_{i})+\lambda^{-}_i(\overline{v}_{i})H(x_{i}-\overline{v}_{i})]|\hat{B}_{i}(\mathbf{N})-\hat{B}_{i}(\hat{\mathbf{N}})|\nonumber\\
 &&+ \sum_{i=1}^{I}\lambda^{+}_i(\overline{v}_{i+1})H(x_{i+1}-\overline{v}_{i+1})|\hat{B}_{i+1}(\mathbf{N})-\hat{B}_{i+1}(\hat{\mathbf{N}})|.
\end{eqnarray*}
 The definitions of $\lambda_i^{\pm}(x)$ and $H(x)$ in (\ref{values of lambda}) and (\ref{heavyside}), respectively, guarantee that $0\leq \lambda_i^{\pm}(x)H(x) \leq 1$. Thus, by using this upper bound, the above inequality becomes
 \begin{eqnarray}\label{Bcap}
\|\hat{\mathbf{B}}({\mathbf{N}})-\hat{\mathbf{B}}({\hat{\mathbf{N}}})\|&\leq &\sum_{i=1}^{I}|\hat{B}_{i-1}(\mathbf{N})-\hat{B}_{i-1}(\hat{\mathbf{N}})|+ \sum_{i=1}^{I}|\hat{B}_{i}(\mathbf{N})-\hat{B}_{i}(\hat{\mathbf{N}})|\\
 &&\nonumber+\sum_{i=1}^{I} |\hat{B}_{i+1}(\mathbf{N})-\hat{B}_{i+1}(\hat{\mathbf{N}})|.
\end{eqnarray}
By (\ref{cond on beta}), there exists a constant $C>0$ such that $\beta(x,y)\leq C$ for all $x,y\in ]0,R]$. Then, substituting (\ref{E:disc B_i}) into (\ref{Bcap}), we have
\begin{eqnarray*}
\|\hat{\mathbf{B}}({\mathbf{N}})-\hat{\mathbf{B}}({\hat{\mathbf{N}}})\| &\leq &\frac{1}{2}C\sum_{i=1}^{I}  \sum_{j=1}^{i-1} \sum_{x_{i-3/2}\leq x_j+x_k < x_{i-1/2}} |N_j N_k-\hat{N}_j \hat{N}_k|\nonumber\\
&&+\frac{1}{2}C\sum_{i=1}^{I}  \sum_{j=1}^{i} \sum_{x_{i-1/2}\leq x_j+x_k < x_{i+1/2}} |N_j N_k-\hat{N}_j \hat{N}_k|\nonumber\\
&& + \frac{1}{2}C\sum_{i=1}^{I}  \sum_{j=1}^{i+1} \sum_{x_{i+1/2}\leq x_j+x_k < x_{i+3/2}} |N_j N_k-\hat{N}_j \hat{N}_k|\nonumber\\
&\leq & \frac{3}{2}C\sum_{j=1}^{I}\sum_{k=1}^{I}|N_j N_k-\hat{N}_j\hat{N}_k|.
\end{eqnarray*}
Now we apply the following useful equality $N_j N_k-\hat{N}_j\hat{N}_k = \frac{1}{2}[(N_j+\hat{N}_j)(N_k-\hat{N}_k) + (N_j-\hat{N}_j)(N_k+\hat{N}_k)]$ to get
\begin{eqnarray}\label{1st lipschitz condition for B}
\hspace{.4in}\|\hat{\mathbf{B}}({\mathbf{N}})-\hat{\mathbf{B}}({\hat{\mathbf{N}}})\| \leq  \frac{3}{4}C\sum_{j=1}^{I}\sum_{k=1}^{I}\bigg[|(N_j+\hat{N}_j)| |(N_k-\hat{N}_k)| + |(N_j-\hat{N}_j)| |(N_k+\hat{N}_k)|\bigg].
\end{eqnarray}
It can be easily shown that the total number of particles decreases in a coagulation process, i.e.\
\begin{eqnarray*}
 \sum_{j=1}^{I}N_j \leq N_T^0:= \text{Total number of particles which are taken initially}.
\end{eqnarray*}
The  equation (\ref{1st lipschitz condition for B}) can be rewritten as
\begin{eqnarray}\label{lipschitz condition for B}
\|\hat{\mathbf{B}}({\mathbf{N}})-\hat{\mathbf{B}}({\hat{\mathbf{N}}})\| & \leq&  \frac{3}{2} N_T^0 C\bigg[ \sum_{k=1}^{I}|(N_k-\hat{N}_k)|+ \sum_{j=1}^{I}|(N_j-\hat{N}_j)|\bigg]\\
&& \nonumber\leq 3 N_T^0 C \|\mathbf{N}-\hat{\mathbf{N}}\|.
\end{eqnarray}
Similarly as before we can easily show the Lipschitz condition for death term as
\begin{eqnarray}\label{lipschitz condition for D}
 \|\hat{\mathbf{D}}({\mathbf{N}})-\hat{\mathbf{D}}({\hat{\mathbf{N}}})\|\leq 3 N_T^0 C \|\mathbf{N}-\hat{\mathbf{N}}\|.
\end{eqnarray}
Thus, Theorem \ref{convergence theorem} implies the convergence of the cell average technique and the convergence is of the same order as the consistency.

\section{Conclusions}\label{conc}
We have presented a detailed convergence analysis of the cell average technique for nonlinear continuous Smoluchowski coagulation equation. 
It is proved that the cell average technique is second order convergent on uniform grids. However, it gives only a first order convergence on non-uniform 
smooth geometric grids. To obtain a second order convergence, either one needs to adapt a different approach than the one presented here, or modify the error 
term $E_1$ which may lead to some improvements in CAT. It is also interesting to analyze CAT for nonlinear continuous SCE on more general grids, which we intend to study 
in future.


\begin{thebibliography}{10}

\bibitem{Dubovskii:1994}
{\sc P.~B. Dubovski\v{i}}, {\it
Mathematical Theory of Coagulation, \textit{Lecture notes}}, Global Analysis Research Center, Seoul National university, 23, 1994.

\bibitem{Dubovskii:1992}
{\sc P.~B. Dubovski\v{i}, V.~A. Galkin, and I.~W. Stewart}, {\it
Exact solutions for the coagulation-fragmentation equations}, J. Phys. A: Math. Gen., 25
(1992), pp.~4737--4744.

\bibitem{Dubovskii:1996}
{\sc P.~B. Dubovski\v{i}, and I.~W. Stewart}, {\it
Existence, uniqueness and mass conservation for the coagulation-fragmentation equation}, Math. Meth. Appl. Sci., 19
(1996), pp.~571--591.


\bibitem{Escobedo:2003}
{\sc M.~Escobedo, P.~Lauren{\c c}ot, S.~Mischler, and B.~Perthame}, {\it
Gelation and mass conservation in coagulation-fragmentation models}, J. Differ. Equ., 195
(2003), pp.~143--174.

\bibitem{Fournier:2005}
{\sc N.~Fournier, and P.~Lauren{\c c}ot}, {\it
Existence of self-similar solutions to {S}moluchowski's coagulation equation}, Comm. Math. Phys., 256
(2005), pp.~589--609.

\bibitem{Fournier:2006}
{\sc N.~Fournier, and P.~Lauren{\c c}ot}, {\it
Well-posedness of {S}moluchowski's coagulation equation for a class of homogeneous kernels}, J. Funct. Anal., 233
(2006), pp.~351--379.

\bibitem{Giri:2010thesis}
{\sc A.~K. Giri}, {\it
Mathematical and numerical analysis for coagulation-fragmentation equations}, PhD Thesis, Otto-von-Guericke-University Magdeburg, Germany, 2010.

\bibitem{Giri:2011}
{\sc A.~K. Giri, J.~Kumar, and G.~Warnecke}, {\it
The continuous coagulation equation with multiple fragmentation}, J. Math. Anal. Appl., 374
(2011), pp.~71--87.

\bibitem{Giri:2011II} 
{\sc A.~K. Giri, G.~Warnecke}, {\it Uniqueness for the continuous coagulation-fragmentation 
equation with strong fragmentation}, Z. Angew. Math. Phys., 62 (2011) pp.~1047-1063.

\bibitem{Giri:2012} {\sc A.~K. Giri, Ph.~Lauren\c{c}ot, G.~Warnecke}, {\it Weak solutions to the continuous coagulation 
equation with multiple fragmentation}, Nonlinear Analysis, 75 (2012) pp.~2199-2208.

\bibitem{Hundsdorfer:2003} {\sc W.~Hundsdorfer, and J.~G. Verwer},
{\em Numerical solution of time-dependent advection-diffusion-reaction equations},
Springer-Verlag New York, USA, 1st edition, 2003.

\bibitem{J_Kumar:2007}
{\sc J.~Kumar, M.~Peglow, G.~Warnecke, and S.~Heinrich}, {\it
An efficient numerical technique for solving population balance equation involving aggregation, breakage, growth
  and nucleation}, Powder Technol., 179
(2007), pp.~205--228.

\bibitem{J_Kumar:2008II} {\sc J.~Kumar, and G.~Warnecke}, {\it Convergence analysis of sectional methods for solving breakage population balance equations - II: {T}he cell average 
technique}, Numer. Math., 110 (2008),
pp.~539--559.

\bibitem{Lamb:2004}
{\sc W.~Lamb}, {\it Existence and uniqueness results for the continuous coagulation and fragmentation equation}, Math. Meth. Appl. Sci., 27
(2004), pp.~703--721.

\bibitem{Laurencot:2000}
{\sc P.~Lauren{\c c}ot}, {\em On a class of continuous coagulation- fragmentation equations}, J. Differ. Equ., 167 (2000),
  pp.~245--274.

\bibitem{Laurencot:2002DC}
{\sc P.~Lauren{\c c}ot, and S.~Mischler}, {\em From the discrete to the continuous coagulation-fragmentation equations}, Proc. Roy. Soc. Edinburgh, 132A (2002),
  pp.~1219--1248.


\bibitem{Linz:1975}
{\sc P.~Linz}, {\em Convergence of a discretization method for integro-differential equations}, Numer. Math., 25 (1975), pp.~103--107.

\bibitem{Mclaughlin:1997I}
{\sc D.~J. McLaughlin, W.~Lamb, and A.~C. McBride}, {\em A semigroup approach to fragmentation models}, SIAM J. Math. Anal., 28 (1997), pp.~1158--1172.

\bibitem{Mclaughlin:1997}
{\sc D.~J. McLaughlin, W.~Lamb, and A.~C. McBride}, {\em An existence and uniqueness result for a coagulation and multiple-fragmentation equation}, SIAM J. Math. Anal., 28 (1997), pp.~1173--1190.

\bibitem{Stewart:1990}
{\sc I.~W. Stewart}, {\em A global existence theorem for the general coagulation-fragmentation equation with unbounded kernels}, Math. Meth. Appl. Sci., 11 (1989), pp.~627--648.
\end{thebibliography}
\end{document}